\documentclass{article}
\usepackage{multicol,graphicx,color}
\usepackage{pslatex}
\usepackage{authblk}
\usepackage{amsthm}
\usepackage{amsmath}
\usepackage{amssymb}
\usepackage{latexsym}
\usepackage{lscape}
\usepackage{epsfig}
\usepackage{pstricks}
\usepackage{amsfonts}
\usepackage{mathrsfs}
\usepackage{mathrsfs}
\usepackage[
  hmarginratio={1:1},     
  vmarginratio={1:1},     
  textwidth=15cm,        
  textheight=21cm,
  heightrounded,          
]{geometry}

\usepackage{graphicx,color}
\usepackage[colorlinks]{hyperref}
\hypersetup{linkcolor=blue,citecolor=blue,filecolor=black,urlcolor=blue}

\setcounter{figure}{1}

\theoremstyle{plain}
\newtheorem{theorem}{Theorem}

\newtheorem{lemma}[theorem]{Lemma}
\newtheorem{proposition}[theorem]{Proposition}
\theoremstyle{definition}
\newtheorem{definition}[theorem]{Definition}
\newtheorem{remark}[theorem]{Remark}

\numberwithin{equation}{section}
\numberwithin{theorem}{section}

\newcommand{\cG}{\mathcal{G}}
\newcommand{\R}{\mathbb{R}}
\newcommand{\eps}{\varepsilon}
\newcommand{\dist}{{\rm dist}}

\newcommand{\edge}{{\rm e}}
\newcommand{\pa}{\partial}


\newcommand{\M}{S_{\mu}}

\author{Jack Borthwick\footnote{jack.borthwick@math.cnrs.fr}}  \affil{Universit\'e de Franche-Comt\'e, Laboratoire de Math\'ematiques de Besan\c{c}on, UMR CNRS 6623, 16 route de Gray, 25000 Besan\c{c}on, France}

\author{Xiaojun Chang \footnote{changxj100@nenu.edu.cn}} \affil{School of Mathematics and Statistics \& Center for Mathematics and Interdisciplinary Sciences,
 Northeast Normal University, Changchun 130024, Jilin,
PR China}

\author{Louis Jeanjean\footnote{louis.jeanjean@univ-fcomte.fr}}  \affil{Universit\'e de Franche-Comt\'e, Laboratoire de Math\'ematiques de Besan\c{c}on, UMR CNRS 6623, 16 route de Gray, 25000 Besan\c{c}on, France}

\author{Nicola Soave\footnote{nicola.soave@unito.it}}  \affil{Dipartimento di Matematica ``Giuseppe Peano", Universit\`a degli Studi di Torino, 10123 Torino, Italy}

\title{Normalized solutions of $L^2$-supercritical NLS equations on noncompact metric graphs with localized nonlinearities}
\date{}

\begin{document}

\maketitle

\begin{abstract}
\noindent In this paper we are concerned with the existence of normalized solutions for nonlinear Schr\"odinger equations on noncompact metric graphs with localized nonlinearities. In a $L^2$-supercritical regime, we obtain the existence of solutions for any prescribed mass. This result is obtained through  an approach which could prove successful to treat more general equations on noncompact graphs.
\end{abstract}

\medskip

{\small \noindent \text{Keywords:} Nonlinear Schr\"odinger equations; $L^2$-supercritical;
Noncompact metric graphs; Localized nonlinearities; Variational methods.\\
\text{Mathematics Subject Classification:} 35J60, 47J30}

\medskip

{\small \noindent \text{Acknowledgements:}
X. J. Chang is partially supported by NSFC (11971095).
N. Soave is partially supported by the INDAM-GNAMPA group.
\\ J. Borthwick gratefully acknowledges that part of this work was supported by the French Investissements d'Avenir program, project ISITE-BFC (contract ANR- 15-IDEX-0003).}

\section{Introduction and main results}\label{intro}

This paper is devoted to the existence of bound states of prescribed mass for the $L^2$\emph{-supercritical} nonlinear Schr\"odinger equation (NLS) on $\cG$
\begin{equation}\label{stat nls}
-u''+\lambda u=\kappa(x)|u|^{p-2}u,
\end{equation}
coupled with Kirchhoff conditions at the vertices, see \eqref{1.2} below. Here $\lambda$ appears as a Lagrange multiplier, $p>6$, $\cG$ is a noncompact connected metric graph, and $\kappa$ is the characteristic function of the compact core $\mathcal{K}$ of $\cG$. We recall, see \cite{AST-JFA2016}, that if $\cG$ is a metric graph with a finite number of vertices  its compact core $\mathcal{K}$ is defined as the metric subgraph of $\cG$  consisting of all the bounded edges of $\cG$.

A solution with prescribed mass  is often referred to as a \emph{normalized solution}. For Equation (\ref{stat nls}) they correspond to
critical points of the NLS energy functional $E(\cdot\,,\mathcal{G}):H^1(\cG) \to \R$ defined by
\begin{eqnarray}\label{1.1}
E(u,\mathcal{G})=\frac{1}{2}\int_{\mathcal{G}}|u'|^2\, dx-\frac{1}{p}\int_{\mathcal{K}}|u|^p\, dx
\end{eqnarray}
under a mass constraint
\begin{equation}\label{mass const}
\int_{\mathcal{G}}|u|^2\,dx=\mu>0.
\end{equation}

The study of NLS on metric graphs has attracted much attention over the last few decades. Among the physical motivations to consider Schr\"odinger equations on a metric graph, the most prominent one comes from the study of quantum graphs, see for example \cite{AST11,BK,KNP,No} and the references therein. In particular, the investigation of NLS with localized nonlinearities appeared in optical fibers and Bose-Einstein condensates in non-regular traps (see \cite{GSD-PRA-2011,No}).

 Recently, much effort has been devoted to the existence of normalized solutions of NLS on metric graphs, in the $L^2$\emph{-subcritical} (i.e., $p\in(2,6)$) or $L^2$\emph{-critical regimes} (i.e., $p=6$). In these two regimes, the energy functional $E(\cdot, \cG)$ is bounded from below on the mass constraint and one may develop  minimization methods. A relevant notion is indeed the one of ground states, namely of solutions which minimize the energy functional on the constraint. For the existence of ground states solutions, see \cite{ABD-JFA2022,ACFN-2014, AST-CVPDE2015,AST-JFA2016,AST-CMP2017,NP,PiSo} for noncompact $\cG$, and \cite{CDS, D-JDE2018} for the compact case; some studies are also conducted on the existence of local minimizers, see \cite{PSV}. Regarding problems with a localized nonlinearity as in Equation (\ref{stat nls}), existence or non-existence of ground states solutions was discussed in \cite{ST-NA2016,T-JMAA2016} for the $L^2$-subcritical case, and in \cite{DT-CVPDE-2019,DT-OTAA-2020} for the $L^2$-critical case. Moreover, in the $L^2$-subcritical regime, genus theory was applied in \cite{ST-JDE2016} to obtain the existence of multiple bound states solutions at negative energy levels.

In the $L^2$-supercritical regime on general metric graphs, i.e., when $p>6$, the energy functional $E(\cdot, \cG)$ is always unbounded from below.
Moreover, due to the fact that graphs are not scale invariant, the techniques based on scalings, usually employed in the Euclidean setting and related to the validity of a Pohozaev identity (see \cite{J} or \cite{BaSo, IkNo, S-JDE-2020, S-JFA-2020}), do not work. These two features make the search for normalized solutions in the 
 $L^2$-supercritical regime delicate. Recently, in \cite{CJS-2022}, this issue was considered on compact metric graphs. The existence of solutions was proved for small values of $\mu >0$, see \cite[Theorem 1.1]{CJS-2022} for a precise statement. In the present paper we pursue this study by allowing the graph to be noncompact. Although in our proofs we make use of the fact that the nonlinearity in Equation (\ref{stat nls}) only acts on a compact part of the graph, we believe that our approach could prove fruitful to study more general situations.

\subsection*{Basic notations and main result}

Throughout this paper, we consider a connected noncompact metric graph $\cG = (\mathcal{E}, \mathcal{V})$, where $\mathcal{E}$ is the set of edges and $\mathcal{V}$ is the set of vertices. We assume that $\cG$ has a finite number of edges and the compact part $\mathcal{K}$ is non-empty.
Without loss of generality, we may assume that $\{{\rm e}_1, \cdots, {\rm e}_{m_1}\}$ are the bounded edges, $\{l_1, \cdots, l_{m_2}\}$ are the half-lines, and $\{{\rm{v}}_1, \cdots, {\rm{v}}_{m_3}\}$ are the finite vertices of $\mathcal{G}$. Here $m_1, m_2, m_3$ are positive integers.
Moreover, any bounded edge ${\rm e}$ is identified with a closed bounded interval $I_{\rm e}$, typically $[0,\ell_{\rm e}]$ (where $\ell_{\rm e}$ is the length of ${\rm e}$), while each unbounded edge is identified with a closed half-line $I_{\rm e}=[0,+\infty)$. For details, one can refer to \cite{AST-CVPDE2015,AST-JFA2016,BK}. A function on metric graph $u: \cG \to \R$ is identified with a vector of functions $\{u_{\mathrm{e}}\}$, where each $u_{\mathrm{e}}$ is defined on the corresponding interval $I_{\rm e}$ such that $u|_{\mathrm{e}}=u_{\mathrm{e}}$. Endowing each edge with Lebesgue measure, one can define the space $L^p(\cG)$ in a natural way, with norm
\[
\|u\|_{L^p(\cG)}^p = \sum_{\mathrm{e}} \|u_{\mathrm{e}}\|_{L^p(\mathrm{e})}^p.
\]
The Sobolev space $H^1(\cG)$ consists of the set of continuous functions $u: \cG \to \R$ such that $u_{\mathrm{e}} \in H^1([0, \ell_{\mathrm{e}}])$ for every edge $\mathrm{e}$ (note that $\ell_{\mathrm{e}}$ can be $+\infty$); the norm in $H^1(\cG)$ is defined as
\[
\|u\|_{H^1(\cG)}^2 = \sum_{\mathrm{e}} \|u_{\mathrm{e}}'\|_{L^2(\mathrm{e})}^2 + \|u_{\mathrm{e}}\|_{L^2(\mathrm{e})}^2.
\]

We shall study the existence of critical points of the functional $E(u,\mathcal{G})$ constrained on the $L^2$-sphere
\[
H_{\mu}^1(\mathcal{G}):= \left\{u\in H^1(\mathcal{G}): \int_{\mathcal{G}}|u|^2\,dx=\mu\right\}.
\]
If $u\in H^1_{\mu}(\mathcal{G})$ is such a critical point, then there exists a Lagrange multiplier $\lambda\in \mathbb{R}$ such that $u$ satisfies the following problem:
\begin{equation}\label{1.2}
\begin{cases}
-u''+\lambda u=\kappa(x)|u|^{p-2}u~~&\mbox{on every edge}~ {\rm e} ~\mbox{of}~ \cG,\\
\sum\limits_{\edge \succ {\rm v}}u_{\edge}'({\rm v})=0~~&\mbox{at every vertex}~{\rm v} \in \mathcal{K},
\end{cases}
\end{equation}
where $\edge \succ {\rm v}$ means that the edge $\edge$ is incident at ${\rm v}$, and the notation $u'_{\edge}({\rm v})$ stands for $u'_{\edge}(0)$ or $-u'_{\edge}(\ell_{\edge})$, according to whether the vertex ${\rm v}$ is identified with $0$ or $\ell_{\mathrm{e}}$ (namely, the sum involves the outer derivatives at ${\rm v}$). The second equation is the so-called \emph{Kirchhoff condition}.

Our main result can be stated as follows.
\begin{theorem}\label{thm: main ex}
Let $\cG$ be any noncompact metric graph having a non-empty compact core $\mathcal{K}$, and let $p>6$. Then, for any $\mu>0$, problem \eqref{1.2} with the mass constraint \eqref{mass const} has a positive solution $u$ associated to a Lagrange multiplier $\lambda>0$. This solution corresponds to a critical point of $E(\cdot\,,\mathcal{G})$ on $H_{\mu}^1(\mathcal{G})$ at a strictly positive energy level.
\end{theorem}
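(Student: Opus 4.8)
The plan is to obtain the solution as a mountain-pass critical point of $E(\cdot,\mathcal{G})$ on the constraint $H^1_\mu(\mathcal{G})$, exploiting in an essential way that the nonlinearity is supported on the compact core $\mathcal{K}$. The geometric cornerstone is an $L^\infty$-estimate on the core that uses the noncompactness of $\mathcal{G}$: since every point of $\mathcal{K}$ can be joined to infinity along a half-line on which any $H^1$ function vanishes at $+\infty$, one obtains a constant $C=C(\mathcal{G})$ with
\[
\|u\|_{L^\infty(\mathcal{K})}^2 \le C\,\|u\|_{L^2(\mathcal{G})}\,\|u'\|_{L^2(\mathcal{G})}, \qquad u\in H^1(\mathcal{G}).
\]
In contrast with the compact case treated in \cite{CJS-2022}, there is no additive constant on the right-hand side, and this is exactly what will make the construction work for every $\mu>0$ rather than only for small mass. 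Combined with $\int_{\mathcal{K}}|u|^p \le \|u\|_{L^\infty(\mathcal{K})}^{p-2}\,\mu$, on $H^1_\mu(\mathcal{G})$ this gives
\[
E(u,\mathcal{G}) \ge \tfrac12\|u'\|_{L^2(\mathcal{G})}^2 - C'\,\mu^{(p+2)/4}\,\|u'\|_{L^2(\mathcal{G})}^{(p-2)/2}.
\]
Because $p>6$ forces $(p-2)/2>2$, the right-hand side is strictly positive on a small sphere $\{\|u'\|_{L^2}=\rho\}$; on the other hand, concentrating most of the mass into a narrow spike on one bounded edge drives $E$ below $0$, while spreading the mass along a long exponential tail on a half-line produces functions with $\|u'\|_{L^2}<\rho$ and energy close to $0$. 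First I would use these three facts to set up a mountain-pass geometry on $H^1_\mu(\mathcal{G})$ and define a minimax level $c(\mu)\ge \inf_{\|u'\|_{L^2}=\rho}E>0$.

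The supercritical exponent then creates the expected difficulty: since metric graphs carry no dilation invariance, there is no Pohozaev identity, and the standard argument producing bounded Palais--Smale sequences fails. Indeed, the energy and Nehari relations leave $\|u'\|_{L^2}$ undetermined; writing $T=\|u'\|_{L^2}^2$ and $N=\int_{\mathcal{K}}|u|^p$, they give $\lambda\mu=-(\tfrac p2-1)T+pc$, so that they bound the multiplier $\lambda$ only from above and $T$ stays bounded precisely when $\lambda$ is bounded below. To bypass this I would invoke Jeanjean's monotonicity trick: introduce the family $E_\sigma(u)=\tfrac12\|u'\|_{L^2}^2-\tfrac{\sigma}{p}\int_{\mathcal{K}}|u|^p$ with $\sigma\in[\tfrac12,1]$, check that the mountain-pass geometry above holds uniformly in $\sigma$, and deduce that for almost every $\sigma$ there is a bounded Palais--Smale sequence for $E_\sigma$ on $H^1_\mu(\mathcal{G})$ at the monotone, hence almost everywhere differentiable, level $c(\sigma)$. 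After establishing compactness this produces a critical point $u_\sigma$ with multiplier $\lambda_\sigma$; letting $\sigma\uparrow 1$ along admissible values and controlling the resulting sequence would give a critical point of $E(\cdot,\mathcal{G})$ at the positive level $\lim_{\sigma\to1}c(\sigma)$.

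The crux of the argument, and the place where the noncompactness really bites, is compactness. Given a bounded Palais--Smale (or critical-point) sequence $u_n\rightharpoonup u$, the compact embedding $H^1(\mathcal{K})\hookrightarrow L^p(\mathcal{K})$, available precisely because $\mathcal{K}$ consists of bounded edges, guarantees that the nonlinear term and its differential pass to the limit. The danger is that mass escapes to infinity along the half-lines, so that $u\equiv 0$. Here the localization of the nonlinearity is decisive: on each half-line $\kappa\equiv 0$, so $u_n$ solves, up to a vanishing error, the linear equation $-u_n''+\lambda_n u_n=0$; once $\lambda_n$ is shown to be bounded below by a positive constant, this equation is coercive and forces the half-line parts to be controlled by their values at the core vertices through exponential tails, which prevents dichotomy and pins the mass on $\mathcal{K}$. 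Ruling out vanishing would then follow from the strict positivity of the level $c>0$, which obliges $\int_{\mathcal{K}}|u|^p$ to stay bounded away from $0$. I expect the genuinely delicate point to be exactly this uniform lower bound on the multipliers $\lambda_n$ (equivalently, the uniform $H^1$-bound as $\sigma\to1$), since on graphs it cannot be read off from a scaling identity and must instead be extracted from the localized structure.

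Finally, I would upgrade the solution to the stated form. Replacing $u$ by $|u|$ leaves the energy and the constraint unchanged, so the minimax solution may be taken nonnegative; by the equation and a strong maximum principle on the connected graph it is then strictly positive everywhere, in particular on the half-lines. A positive $H^1$-solution of $-u''+\lambda u=0$ on a half-line exists only for $\lambda>0$, since it must be a decaying exponential $u(v)\,e^{-\sqrt\lambda\,x}$; this yields $\lambda>0$. The solution carries the prescribed mass $\mu$, sits at the strictly positive energy level $c(\mu)$, and solves \eqref{1.2}, as required.
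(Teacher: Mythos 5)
Your overall architecture matches the paper's: Gagliardo--Nirenberg on the noncompact graph to get a mountain-pass geometry uniform in the mass (correctly contrasted with the compact case), the monotonicity trick applied to the family $E_\sigma$, compactness of $H^1(\mathcal K)\hookrightarrow L^p(\mathcal K)$ to pass the localized nonlinearity to the limit, and the final upgrade to positivity and $\lambda>0$ via the half-line ODE. However, the proposal has a genuine gap at exactly the two points you yourself flag as ``delicate'', and in both cases the missing ingredient is the same: the second-order (approximate Morse index) information on the Palais--Smale sequences. First, the lower bound on the multipliers. You write that the uniform lower bound on $\lambda_n$ ``must be extracted from the localized structure'', but you give no mechanism, and none of the identities you invoke can produce it (note also a sign error: from $E(u_n)=c_n$ and the Nehari/Kirchhoff relation one gets $\lambda_n\mu=(\tfrac p2-1)\|u_n'\|_{L^2}^2-pc_n$, so boundedness of $\|u_n'\|_{L^2}$ is equivalent to an \emph{upper} bound on $\lambda_n$, while compactness requires a \emph{lower} bound --- these are independent pieces of information). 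The paper obtains $\liminf\lambda_n\ge 0$ by combining the bound $\tilde m_{\zeta_n}(u_n)\le 1$ from Theorem \ref{thm: monot trick second order} with Lemma \ref{L-eigenvalue}: for any $\lambda<0$ one builds a three-dimensional subspace of disjointly supported, slowly oscillating bumps on a half-line (where the nonlinearity is absent, so the Hessian reduces to $\int|\varphi'|^2+\lambda\int|\varphi|^2$) on which the quadratic form is uniformly negative; Lemma \ref{*lem-Louis1} then forces $\lambda_n>\lambda$. Without some substitute for this argument your compactness step never gets off the ground, because a plain bounded Palais--Smale sequence from the monotonicity trick carries no sign information on $\lambda_n$.

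Second, the passage $\sigma\uparrow 1$. The monotonicity trick gives critical points $u_\sigma$ only for almost every $\sigma$, and your phrase ``controlling the resulting sequence'' hides the main difficulty: one must show that $\{u_{\sigma_n}\}$ stays bounded in $H^1$, equivalently that $\{\lambda_{\sigma_n}\}$ is bounded \emph{above}, and no scaling or Pohozaev identity is available to do this. The paper's route (Propositions \ref{thm: blow-up 1} and \ref{thm: blow-up 2}) is a blow-up analysis in the spirit of Esposito--Petralla: assuming $\lambda_n\to+\infty$, the Morse index bound $m(u_{\sigma_n})\le 2$ (inherited from the approximate Morse index of the Palais--Smale sequences via Proposition \ref{prop: ex for ae rho}) limits the number of concentration points to at most two, each contributing a negative direction to the Hessian, and an exponential decay estimate away from these points yields a contradiction with the mass constraint. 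Your proposal contains no mechanism playing this role, so the limit $\sigma\to 1$ is not justified as written. In short: the skeleton is right, but the two load-bearing estimates both require the Morse-type information that your argument never introduces.
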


Let us present the main ideas to prove  Theorem \ref{thm: main ex}.

It is simple to check that the functional $E(\cdot\,,\cG)$ admits a mountain pass geometry in $H_{\mu}^1(\mathcal{G})$ for any $\mu>0$. However, the existence of a bounded Palais-Smale sequence at the mountain pass level, a key preliminary step to obtain a critical point, is not directly available. To overcome this issue, following the strategy presented in \cite{CJS-2022}, we first  consider the family of functionals $E_{\rho}(\cdot,\mathcal{G}): H^1(\mathcal{G}) \to \mathbb{R}$ given by
\begin{eqnarray}\label{para func}
E_{\rho}(u,\mathcal{G})=\frac{1}{2}\int_{\mathcal{G}}|u'|^2dx-\frac{\rho }{p}\int_{\mathcal{K}}|u|^pdx, \qquad \forall u\in H^1(\mathcal{G}), \, \forall \rho\in \left[\frac12, 1 \right].
\end{eqnarray}
As in \cite{CJS-2022} we shall rely on a  parametrized version of a mountain-pass theorem on a constraint established in \cite{BCJS-2022}. Roughly, this theorem, recalled here as Theorem \ref{thm: monot trick second order}, guarantees, for almost every value of a parameter, the existence of a bounded Palais-Smale sequence with second-order information. 
Applying Theorem \ref{thm: monot trick second order} on the family $E_{\rho}(\cdot,\mathcal{G})$ we deduce that, for almost every $\rho \in \left[\frac12, 1 \right]$, these functionals admit a bounded Palais-Smale sequence $\{u_{\rho,n}\}$ of non negative functions with ``approximate Morse index" information.

The next step is to show that, for such $\rho \in \left[\frac12, 1 \right]$, the sequence $\{u_{\rho,n}\}$ converges to some $u_{\rho}$. If this is the case, as a consequence of the Morse type information on $\{u_{\rho,n}\}$ one can directly deduce that $u_{\rho}$ has a Morse index $m(u_{\rho})$ at most 2, see Definition \ref{def: morse}.
 In \cite{CJS-2022} the convergence of $\{u_{\rho,n}\}$ was immediate due to the compactness of $\cG$.  In our setting, despite the fact that the nonlinearity acts only on the compact core, the non-compactness of the graph prevents us from directly concluding that bounded Palais-Smale sequences converge. This kind of difficulty is regularly encountered  in the search of normalized solutions in $L^2$-supercritical problems when the underlying domain is noncompact, see for example \cite{JeLu20,MeSc22}.  To overcome it one needs to benefit from information on the sign of an associated sequence of {\it almost} Lagrange multipliers. In our problem we derive this information by using the  ``approximate Morse index" information carried by the sequence $\{u_{\rho,n}\}$.  This argument seems new and we suspect that it could be used in other problems. In order to emphasize its generality we present it, in Section \ref{Section:2}, in a general framework.

Having been able to show, for almost every $\rho \in \left[\frac12, 1 \right]$, the convergence of $\{u_{\rho,n}\}$ to some $u_{\rho}$ with Morse index at most 2 (see Proposition \ref{prop: ex for ae rho} for a precise statement), our proof follows the lines of \cite{CJS-2022}. Namely, taking a sequence $\rho_n \to 1^-$, we adapt the blow-up analysis, developed  in \cite{CJS-2022}, to show that $\{u_n\}:=\{u_{\rho_n}\}$ is  bounded. Then, we use some arguments previously developed in the proof of Proposition \ref{prop: ex for ae rho}  to deduce that $\{u_n\}$ converges and to end the proof of Theorem \ref{thm: main ex}.

The rest of the paper is mainly devoted to the proof of Theorem \ref{thm: main ex}. Before proceeding, we would like to discuss some open problems and possible further directions of research.

\begin{remark}
 Since they are obtained through a mountain pass procedure on a codimension $1$ manifold, it is natural to expect that the solutions we found are orbitally unstable. This is what happens in mass-superlinear problems when the underlying equation is autonomous and set on all the space. See, for example, [39] in that direction. However, here things are more complex since we have little information on the variational characterizations of the solutions that we obtain. In comparison to [39], we do not know if our solutions can be characterized as a {\it ground state} or as a global minimizer on some additional constraint. More globally, the issue of orbital stability (and the related issue of the monotonicity of the mass with respect to the Lagrange multiplier) appears complex \emph{on a general non-compact graph}. So far, orbital stability/instability of bound states on metric graphs has only been characterized for some specific examples, see \cite{ACFN-2014, CDS, NPS} and references therein. 
\end{remark}

\begin{remark}
Another challenging issue would consist in characterizing precisely the properties of the solutions in terms of the number and the location of local and global maximum points, at least in the limit of large/small mass parameter. This is also related to multiplicity issues. We refer to \cite{AST2019, BMP} for some recent results in the subcritical case, however it seems unlikely that they are directly adaptable to the supercritical case (the former exploits a doubly constrained minimization argument; the latter uses some techniques specifically suited to deal with the cubic NLS). 

On the other hand, it is not difficult to prove the following basic result, which actually holds for \emph{any} one-signed critical point of $E(\cdot, \cG)$ on $S_\mu$.
\end{remark}

\begin{proposition}\label{prop: properties}
For $p >6$ and $\mu>0$, let $u \in H^1_\mu(\cG)$ be a critical point of $E(\cdot, \cG)$ defined in \eqref{1.1} under the mass constraint \eqref{mass const}, and suppose that $u>0$ and $\lambda>0$. If $\bar x \in \cG$ is a local maximum point of $u$, then $\bar x \in \mathcal{K}$, the compact core. Furthermore, on any half-line $\ell_i \simeq [0,+\infty)$ one has $u(x) = c_i e^{-\sqrt{\lambda}x}$ for some $c_i>0$.
\end{proposition}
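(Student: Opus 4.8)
The plan is to treat the two assertions in the reverse of the stated order, establishing the explicit exponential profile on the half-lines first and then reading off the location of the local maxima as a consequence.

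First I would exploit that the nonlinearity is localized: on any half-line $\ell_i \simeq [0,+\infty)$ one has $\kappa \equiv 0$, so the equation in \eqref{1.2} reduces on $[0,+\infty)$ to the linear constant-coefficient ODE $-u'' + \lambda u = 0$. Since a weak $H^1$ solution of this linear equation is automatically smooth on the edge, the pointwise formula for the general solution, $u(x) = A\, e^{\sqrt{\lambda}\,x} + B\, e^{-\sqrt{\lambda}\,x}$ (using $\lambda>0$), is justified. The membership $u \in H^1(\cG)$ forces $u_{\ell_i} \in L^2([0,+\infty))$, which rules out the growing exponential and yields $A = 0$; hence $u(x) = B\, e^{-\sqrt{\lambda}\,x}$. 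Finally the hypothesis $u > 0$ gives $B =: c_i > 0$, which proves the second assertion.

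With this profile in hand, the first assertion is immediate. On each half-line the function $x \mapsto c_i\, e^{-\sqrt{\lambda}\,x}$ is strictly decreasing, since $c_i > 0$ and $\sqrt{\lambda} > 0$. Thus no interior point of a half-line can be a local maximum of $u$: moving slightly toward the junction vertex strictly increases the value of $u$. Consequently, any local maximum point $\bar x$ must lie outside the open half-lines, i.e.\ $\bar x \in \mathcal{K}$. Here one uses that, in a connected graph with non-empty compact core, every half-line is attached to a vertex belonging to $\mathcal{K}$, so that the junction points are themselves part of the compact core and the conclusion $\bar x \in \mathcal{K}$ holds even when the maximum is attained at an endpoint of a half-line.

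The argument is essentially elementary once the localization of the nonlinearity is used. The only points requiring minor care are the regularity needed to treat the restriction of $u$ to each edge as a classical solution of the ODE, and the bookkeeping at the junction vertices (confirming that the endpoints of the half-lines belong to $\mathcal{K}$ and correctly interpreting the notion of local maximum on the metric graph). I do not anticipate any substantial obstacle.
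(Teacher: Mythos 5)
Your proposal is correct and follows essentially the same route the paper indicates (the paper only sketches it in one line: integrate the equation on each half-line and use $\lambda>0$). Solving $-u''+\lambda u=0$ on each half-line, discarding the growing exponential via $u\in L^2$, using $u>0$ to get $c_i>0$, and then reading off strict monotonicity to exclude local maxima on the half-lines is exactly the intended argument; your remark that every finite vertex of a connected graph with non-empty compact core must be incident to a bounded edge, and hence lies in $\mathcal{K}$, correctly handles the endpoint case.
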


The proof follows directly by integrating the equation on each half-line, and using the fact that $\lambda>0$.

\begin{remark}
Finally, we make a short remark on extending our methods to study the existence of bound states in cases where the nonlinearity acts on the full graph, and not only its compact core. In this direction, our min-max geometric argument still works, but some issues arise in studying the compactness of Palais-Smale sequences. Considering what happens in the subcritical and critical cases (see e.g. \cite{AST-CVPDE2015, AST-JFA2016, AST-CMP2017}), it is expected that topological and metric properties of the underlying graph play a role.
\end{remark}

The paper is organized as follows. In Section \ref{Section:2} we recall in Theorem \ref{thm: monot trick second order} the main result of \cite{CJS-2022} and explore some of its consequences. In particular, we show that the second order information on the Palais-Smale sequences can be used to get uniform bounds from below on the associated sequences of almost Lagrange parameter, see Lemma \ref{*lem-Louis1}. In Section \ref{sec: appr} we show that $E_\rho(\cdot\,,\cG)$  has a mountain pass critical point for almost every $\rho \in \left[\frac12, 1 \right]$ and we prove Proposition \ref{prop: ex for ae rho}. Section  \ref{sec:4}  presents the proof of Theorem \ref{thm: main ex}.


\section{Some preliminary definitions and results}\label{Section:2}

In this section, we recall \cite[Theorem 1]{BCJS-2022} and explore some of its consequences.

Let $\left(E,\langle \cdot, \cdot \rangle\right)$ and $\left(H,(\cdot,\cdot)\right)$ be two \emph{infinite-dimensional} Hilbert spaces and assume that $E\hookrightarrow H \hookrightarrow E'$,
with continuous injections.  For simplicity, we assume that the continuous injection $E\hookrightarrow H$ has norm at most $1$ and identify $E$ with its image in $H$.
Set \[ \begin{cases} \|u\|^2=\langle u,u \rangle,\\ |u|^2=(u,u),\end{cases}\quad u\in E,\]
and we define for $\mu>0$: \[S_\mu= \{ u \in E\big|\, |u|^2=\mu \}. \]
In the context of this paper, we shall have $E=H^1(\cG)$ and $H=L^2(\cG)$.
Clearly, $S_{\mu}$ is a smooth submanifold of $E$ of codimension $1$. Its tangent space at a given point $u \in S_{\mu}$ can be considered as the closed codimension $1$ subspace of $E$ given by:
\[  T_u S_{\mu}= \{v \in E \big|\, (u,v) =0 \}.\]
In the following definition, we denote by $\|\cdot\|_*$ and $\|\cdot\|_{**}$, respectively, the operator norm of $\mathcal{L}(E,\R)$ and of $\mathcal{L}(E,\mathcal{L}(E,\R))$.

\begin{definition}
Let $\phi : E \rightarrow \mathbb{R}$ be a $C^2$-functional on $E$ and $\alpha \in (0,1]$. We say that $\phi'$ and $\phi''$ are $\alpha$-H\"older continuous on bounded sets if for any $R>0$ one can find $M=M(R)>0$ such that, for any $u_1,u_2\in B(0,R)$:
\begin{equation}\label{Holder}
||\phi'(u_1)-\phi'(u_2)||_*\leq M ||u_1-u_2||^{\alpha}, \quad ||\phi''(u_1)-\phi''(u_2)||_{**} \leq M||u_1-u_2||^\alpha.
\end{equation}
\end{definition}

\begin{definition}\label{def D}
Let $\phi$ be a $C^2$-functional on $E$. For any $u\in E$, we define the continuous bilinear map:
\[ D^2\phi(u)=\phi''(u) -\frac{\phi'(u)\cdot u}{|u|^2}(\cdot,\cdot). \]
\end{definition}

\begin{definition}\label{def: app morse}
Let $\phi$ be a $C^2$-functional on $E$. For any $u\in \M$ and $\theta >0$, we define
 the \emph{approximate Morse index} by
\[
\tilde m_\theta(u)= \sup \left\{\dim\,L\left| \begin{array}{l} \ L \text{ is a subspace of $T_u \M$ such that: }
D^2\phi(u)[\varphi, \varphi]<-\theta \|\varphi\|^2, \quad \forall \varphi \in L \setminus \{0\} \end{array}\right.\right\}.
\]
If $u$ is a critical point for the constrained functional $\phi|_{\M}$ and $\theta=0$, we say that this is the \emph{Morse index of $u$ as constrained critical point}.
\end{definition}

The following abstract theorem was established in \cite{BCJS-2022}. Its derivation is based on a combination of ideas from \cite{FG-1992,FG-1994,J-PRSE1999} implemented in a convenient geometric setting. Note that related theorems, but dealing with unconstrained functionals, were recently developed in \cite{BeRu,LoMaRu}.

\begin{theorem}[Theorem 1 in \cite{BCJS-2022}]\label{thm: monot trick second order}
Let $I \subset (0,+\infty)$ be an interval and consider a family of $C^2$ functionals $\Phi_\rho: E \to \mathbb{R}$ of the form
\[
\Phi_\rho(u) = A(u) -\rho B(u), \qquad \rho \in I,
\]
where $B(u) \ge 0$ for every $u \in E$, and
\begin{equation}\label{hp coer}
\text{either $A(u) \to +\infty$~ or $B(u) \to +\infty$ ~ as $u \in E$ and $\|u\| \to +\infty$.}
\end{equation}
Suppose moreover that $\Phi_\rho'$ and $\Phi_\rho''$ are $\alpha$-H\"older continuous on bounded sets for some $\alpha \in (0,1]$.
Finally, suppose that there exist $w_1, w_2 \in \M$ (independent of $\rho$) such that, setting
\[
\Gamma= \left\{ \gamma \in C([0,1],\M)\big| \ \gamma(0) = w_1, \quad \gamma(1) = w_2\right\},
\]
we have
\begin{equation}\label{mp geom}
c_\rho:= \inf_{\gamma \in \Gamma} \ \max_{ t \in [0,1]} \Phi_\rho(\gamma(t)) > \max\{\Phi_\rho(w_1), \Phi_\rho(w_2)\}, \quad \rho \in I.
\end{equation}
Then, for almost every $\rho \in I$, there exist sequences $\{u_n\} \subset \M$ and $\zeta_n \to 0^+$ such that, as $n \to + \infty$,
\begin{itemize}
\item[(i)] $\Phi_\rho(u_n) \to c_\rho$;
\item[(ii)] $||\Phi'_\rho|_{\M}(u_n)||_* \to 0$;
\item[(iii)] $\{u_n\}$ is bounded in $E$;
\item[(iv)] $\tilde m_{\zeta_n}(u_n) \le 1$.
\end{itemize}
\end{theorem}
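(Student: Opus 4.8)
The plan is to combine Jeanjean's monotonicity trick with a second-order min-max scheme in the spirit of Fang--Ghoussoub, carried out on the constraint $\M$. The starting observation is that, since $B\ge 0$, for $\rho_1<\rho_2$ one has $\Phi_{\rho_2}\le \Phi_{\rho_1}$ pointwise on $E$, whence $c_{\rho_2}\le c_{\rho_1}$; that is, $g:\rho\mapsto c_\rho$ is non-increasing on $I$. By Lebesgue's theorem on monotone functions, $g$ is differentiable at almost every $\rho\in I$, and I would establish conclusions (i)--(iv) at each such point of differentiability, as in \cite{J-PRSE1999}.

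First I would extract a bounded sequence. Fix $\rho$ where $g'(\rho)$ exists and take $\rho_k\uparrow\rho$. For each $k$ choose a near-optimal path $\gamma_k\in\Gamma$ with $\max_t\Phi_{\rho_k}(\gamma_k(t))\le c_{\rho_k}+(\rho-\rho_k)$. Writing $\Phi_{\rho_k}=\Phi_\rho+(\rho-\rho_k)B$ and evaluating at a point $t_k$ where $\Phi_\rho(\gamma_k(\cdot))$ is maximal (so its value is $\ge c_\rho$), a short computation gives
\[
B(\gamma_k(t_k))\le \frac{c_{\rho_k}-c_\rho}{\rho-\rho_k}+1,
\]
whose right-hand side converges to $-g'(\rho)+1<+\infty$. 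Hence $B$ is bounded along these almost-maximizers; since their energy $\Phi_\rho\approx c_\rho$ is bounded and $\Phi_\rho=A-\rho B$, the value $A$ is bounded too, and the coercivity alternative \eqref{hp coer} forces $\|\gamma_k(t_k)\|\le R$ uniformly. A quantitative deformation argument on $\M$ then shows that, absent almost-critical points of $\Phi_\rho|_{\M}$ in a bounded region near level $c_\rho$, one could push the max of $\gamma_k$ strictly below $c_\rho$, contradicting \eqref{mp geom}. This yields a bounded $\{u_n\}\subset\M$ satisfying (i)--(iii).

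The genuinely delicate and novel part is (iv), and it is exactly where the $C^2$ regularity and the $\alpha$-H\"older continuity \eqref{Holder} enter, following \cite{FG-1992,FG-1994}. I would argue by contradiction: if no bounded almost-critical sequence at level $c_\rho$ admitted approximate Morse index $\le 1$, then along such a sequence there would be, for each $\theta>0$, a subspace of dimension $\ge 2$ of $T_{u}\M$ on which $D^2\Phi_\rho(u)$ is more negative than $-\theta\|\cdot\|^2$. Using these two strictly negative directions I would perform a two-parameter deformation of an almost-optimal path: near such a point one writes the second-order Taylor expansion of $\Phi_\rho|_{\M}$, controls the remainder by \eqref{Holder}, and combines the first-order descent along $-\nabla\Phi_\rho|_{\M}$ with motion inside the negative cone to build a map from a $2$-disk into $\M$ whose boundary still joins $w_1$ and $w_2$ but whose image has max energy below $c_\rho$. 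Since the mountain-pass value is a $1$-dimensional min-max, a second negative direction is precisely what lowers the level, contradicting the definition of $c_\rho$; running this quantitatively with $\theta=\zeta_n\to 0^+$ produces $\{u_n\}$ with $\tilde m_{\zeta_n}(u_n)\le 1$.

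The main obstacle is the simultaneous control in this last step: one must couple the constrained pseudo-gradient flow, which lowers the energy but only sees first-order information, with the finite-dimensional negative subspace, which supplies the second-order gain, while remaining on $\M$ and keeping all error terms strictly below the energy gain. The H\"older hypothesis \eqref{Holder} is used precisely to make the second-order expansion uniform on the bounded region isolated in the previous step, so that the deformation proceeds at a scale independent of $n$. Reconciling this uniformity with the facts that $\{u_n\}$ is only an \emph{approximate} critical sequence and that $\zeta_n$ must tend to $0$ is the crux, and the reason the hypotheses are stated as they are.
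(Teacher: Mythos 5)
There is an important mismatch of expectations here: the paper does \emph{not} prove Theorem \ref{thm: monot trick second order}. It is imported verbatim as Theorem 1 of \cite{BCJS-2022}, and the only indication given of its derivation is the sentence that it ``is based on a combination of ideas from \cite{FG-1992,FG-1994,J-PRSE1999} implemented in a convenient geometric setting.'' Your outline is precisely that combination: the monotonicity trick of \cite{J-PRSE1999} (monotonicity of $\rho\mapsto c_\rho$, differentiability a.e., boundedness of $B$ and then of $A$ along almost-optimal paths via \eqref{hp coer}, and a quantitative deformation to extract a bounded Palais--Smale sequence, giving (i)--(iii)), followed by the Fang--Ghoussoub second-order min-max argument for (iv). So at the level of strategy you are following the same route as the cited source, and there is nothing in the present paper against which to check the details. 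That said, your treatment of (iv) is only a heuristic: the actual difficulty, which \cite{BCJS-2022} is written to resolve, is to run the Fang--Ghoussoub deformation \emph{on the constraint} $\M$, which is why the statement involves $D^2\Phi_\rho$ (the Hessian corrected by the Lagrange-multiplier term of Definition \ref{def D}) restricted to $T_u\M$ rather than the free Hessian, and why the $\alpha$-H\"older continuity \eqref{Holder} of both $\Phi_\rho'$ and $\Phi_\rho''$ is needed to make the second-order expansion and the tangential projection uniform on the bounded region. Your sketch of ``a map from a $2$-disk whose boundary joins $w_1$ and $w_2$'' gestures at the right mechanism (a second uniformly negative direction would allow the one-dimensional min-max level to be lowered below $c_\rho$, contradicting \eqref{mp geom}), but as written it is not a proof; to make it one you would essentially have to reproduce the construction of \cite{BCJS-2022}, which is the honest thing to cite here, exactly as the paper does.
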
 
\noindent
It is easily observed, see \cite[Remarks 1.3]{BCJS-2022}, from Theorem \ref{thm: monot trick second order}  (ii)-(iii) that 
\begin{equation}\label{free-gradient}
 \Phi'_{\rho}(u_n) + \displaystyle \lambda_n (u_n, \cdot) \to 0 \, \mbox{ in }  E' \mbox{ as } n \to + \infty
\end{equation} where we have set 
\begin{equation}\label{def-almost-Lagrange} 
\lambda_n := - \displaystyle \frac{1}{\mu}( \Phi'_\rho(u_n)\cdot u_n).
\end{equation} 
Also, Theorem \ref{thm: monot trick second order} (iv) directly implies that if there exists a subspace $W_n \subset T_{u_n}\M$ such that
\begin{equation}\label{L-Hessian}
D^2\Phi_{\rho}(u_n)[w,w] = \Phi''_{\rho}(u_n)[w,w] + \displaystyle \lambda_n (w,w) < - \zeta_n ||w||^2, \quad \mbox{for all } w \in W_n \setminus \{0\},
\end{equation}
then necessarily $\dim W_n \leq 1$. We called the sequence $\{\lambda_n\} \subset \R$ defined in \eqref{def-almost-Lagrange} the sequence of \emph{almost Lagrange multipliers}.

We shall see in Section \ref{sec: appr} that if $\{u_n\} \subset \M$  converges to some $u \in \M$ then information on the Morse index of $u$ as constrained critical point can be obtained. First, let us show how information on the sequence $\{\lambda_n\} \subset \R$ can be derived.
 
\begin{lemma}\label{*lem-Louis1}
Let $\{u_{n}\} \subset S_{\mu}$, $\{\lambda_{n}\} \subset \mathbb{R}$ and $\{\zeta_n\}\subset \mathbb{R}^+$ with $\zeta_n\to 0^+$. Assume that the following conditions hold:
\begin{itemize}
\item[(i)] For large enough $n \in \mathbb{N}$, all subspaces $W_n \subset E$ with the property
\begin{equation}\label{LL-Hess crit*}
\Phi''_{\rho}(u_n)[\varphi,\varphi]+\lambda_n|\varphi|^2 < -\zeta_n \|\varphi\|^2, \qquad \mbox{for all } \, \varphi \in W_n \backslash \{0\},
\end{equation}
satisfy: $\dim W_n \leq 2$.
\item[(ii)] There exist $\lambda \in \mathbb{R}$, a subspace $Y$ of $E$ with $\dim Y \geq 3$ and $a>0$ such that, for large enough $n \in \mathbb{N}$,
\begin{equation}\label{L1*}
\Phi''_{\rho}(u_n)[\varphi,\varphi] + \lambda |\varphi|^2 \leq -a ||\varphi||^2,  \qquad \mbox{for all } \, \varphi \in Y.
\end{equation}
\end{itemize} 
 Then $\lambda_n > \lambda$ for all large enough $n \in \mathbb{N}$. In particular, if (\ref{L1*}) holds for any $\lambda <0$, then $\liminf_{n \to \infty} \lambda_n \geq 0$.
\end{lemma}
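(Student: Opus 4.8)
The plan is to argue by contradiction: whenever $\lambda_n \le \lambda$ for some large $n$, I would use hypothesis (ii) to manufacture a subspace of dimension at least $3$ on which the strict negativity condition \eqref{LL-Hess crit*} holds, in direct violation of (i). Note that no passage to a subsequence is needed — the contradiction can be produced at each individual index $n$, so the conclusion ``$\lambda_n>\lambda$ for all large $n$'' will follow at once.

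First I would fix $N \in \mathbb{N}$ large enough that (i) and (ii) are simultaneously in force and, using $\zeta_n \to 0^+$, that $\zeta_n < a$ for all $n \ge N$. Suppose toward a contradiction that $\lambda_n \le \lambda$ for some $n \ge N$. The decisive (and elementary) point is that $|\varphi|^2 \ge 0$ for every $\varphi$, so $\lambda_n \le \lambda$ forces $\lambda_n |\varphi|^2 \le \lambda |\varphi|^2$; combining this with \eqref{L1*} yields, for every $\varphi \in Y$,
\[
\Phi''_{\rho}(u_n)[\varphi,\varphi] + \lambda_n |\varphi|^2 \le \Phi''_{\rho}(u_n)[\varphi,\varphi] + \lambda |\varphi|^2 \le -a \|\varphi\|^2.
\]
I would then upgrade this non-strict estimate to the strict one demanded by (i): for $\varphi \in Y \setminus \{0\}$ one has $\|\varphi\|^2 > 0$, and since $\zeta_n < a$ we get $-a\|\varphi\|^2 < -\zeta_n \|\varphi\|^2$, hence
\[
\Phi''_{\rho}(u_n)[\varphi,\varphi] + \lambda_n |\varphi|^2 < -\zeta_n \|\varphi\|^2, \qquad \forall \varphi \in Y \setminus \{0\}.
\]
Thus $Y$ satisfies \eqref{LL-Hess crit*}, yet $\dim Y \ge 3 > 2$, contradicting (i). Therefore $\lambda_n > \lambda$ for every $n \ge N$, which is the main assertion.

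For the final ``in particular'' statement I would apply what has just been proved to each fixed $\lambda < 0$ separately: the hypothesis furnishes, for every such $\lambda$, a subspace $Y=Y(\lambda)$ with $\dim Y\ge 3$ and a constant $a=a(\lambda)>0$ for which \eqref{L1*} holds, and the argument above then gives $\lambda_n > \lambda$ for all large $n$, so that $\liminf_{n \to \infty} \lambda_n \ge \lambda$. Taking the supremum over $\lambda < 0$ (equivalently, letting $\lambda \to 0^-$) delivers $\liminf_{n \to \infty} \lambda_n \ge 0$.

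I do not expect a genuine analytic obstacle here, since the proof is purely a comparison of the two quadratic-form inequalities. The only places requiring care are the sign bookkeeping — the fact that $\lambda_n \le \lambda$ shifts the mass term $\lambda_n|\varphi|^2$ in the favorable direction precisely \emph{because} $|\varphi|^2 \ge 0$ — and the use of $\zeta_n \to 0^+$ to convert the non-strict bound supplied by (ii) into the strict inequality required by the approximate-Morse-index condition (i). Both become harmless once $n$ is taken large enough, i.e.\ $n \ge N$.
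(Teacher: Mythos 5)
Your proof is correct and follows essentially the same contradiction argument as the paper: assume $\lambda_n\le\lambda$, use (ii) to show the three-dimensional subspace $Y$ satisfies the strict negativity condition once $\zeta_n<a$, and contradict (i). Your write-up is in fact marginally cleaner than the paper's, which concludes only a non-strict inequality $\leq -\zeta_n\|\varphi\|^2$ before invoking the contradiction, whereas you explicitly note that $-a\|\varphi\|^2<-\zeta_n\|\varphi\|^2$ for $\varphi\neq 0$ gives the strict bound actually required.
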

\begin{proof}
Suppose by contradiction that $\lambda_n \leq \lambda$ along a subsequence. From \eqref{L1*} we have, for all $\varphi \in Y \backslash \{0\},$
\begin{equation*}
\begin{split}
\Phi''_{\rho}(u_n)[\varphi,\varphi]+\lambda_n|\varphi|^2
& = \Phi''_{\rho}(u_n)[\varphi,\varphi]+\lambda|\varphi|^2 + (\lambda_n - \lambda)|\varphi|^2 \\
& \leq -a ||\varphi||^2 + (\lambda_n - \lambda)|\varphi|^2.
\end{split}
\end{equation*}
Now, since $\zeta_n\to 0^+$, there exists $n_0 \in \mathbb{N}$ such that: $\forall n \geq n_0$,
$\zeta_n <a$. Thus, for an arbitrary $n \geq n_0$, we obtain 
$$ \Phi''_{\rho}(u_n)[\varphi,\varphi]+\lambda_n|\varphi|^2 \leq  -\zeta_n \|\varphi\|^2, \qquad \mbox{for all } \, \varphi \in Y \backslash \{0\},$$
in contradiction with \eqref{LL-Hess crit*} since dim  $Y >2$.
\end{proof}

\section{Mountain pass solutions for approximating problems}\label{sec: appr}

Theorem \ref{thm: monot trick second order} will be a key ingredient in the proof of Theorem \ref{thm: main ex}. To show that the family  $E_\rho(\cdot\,\cG)$ enters into the framework of Theorem \ref{thm: monot trick second order} we first need to show that it has a mountain pass geometry on $H^1_\mu(\cG)$ uniformly with respect to $\rho\in \left[\frac12, 1 \right]$.


\begin{lemma}\label{MP-geometry}
 For every $\mu>0$, there exist $w_1, w_2 \in S_\mu$ independent of $\rho \in \left[\frac12,1\right]$ such that
\begin{eqnarray*}
c_{\rho}:=\inf\limits_{\gamma\in
\Gamma} \, \max\limits_{t\in[0,1]}E_{\rho}(\gamma(t),\mathcal{G}) > \max\{E_{\rho}(w_1,\mathcal{G}), E_{\rho}(w_2,\mathcal{G})\}, \qquad \forall
\rho\in \left[\frac{1}{2},1\right],
\end{eqnarray*}
where
$\Gamma:=\left\{\gamma\in C([0,1], H^1_{\mu}(\mathcal{G}))\big| ~\gamma~ \mbox{is continuous},~\gamma(0)=w_1,\gamma(1)=w_2\right\}.$
\end{lemma}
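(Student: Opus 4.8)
The plan is to build a single barrier at a fixed small value of the gradient norm, uniform in $\rho$, and to place the two endpoints on opposite sides of it, both at energy strictly below the barrier height. The organizing principle is that $E_\rho(u,\cG)=\tfrac12\int_\cG|u'|^2\,dx-\rho B(u)$ with $B(u)=\tfrac1p\int_{\mathcal{K}}|u|^p\,dx\ge 0$, so for each fixed $u$ the map $\rho\mapsto E_\rho(u,\cG)$ is non-increasing; hence
\[
E_1(u,\cG)\le E_\rho(u,\cG)\le E_{1/2}(u,\cG),\qquad \rho\in\left[\tfrac12,1\right].
\]
This monotonicity is what reduces every uniform estimate to a single value of $\rho$, and it is the crux of obtaining the geometry \emph{simultaneously} for all $\rho$.

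First I would set up the barrier. By the one-dimensional Gagliardo--Nirenberg inequality on $\cG$, together with the localization $\int_{\mathcal{K}}|u|^p\le\int_{\cG}|u|^p$, there is $C>0$ with $\int_{\mathcal{K}}|u|^p\,dx\le C\,\|u'\|_{L^2(\cG)}^{(p-2)/2}\,\|u\|_{L^2(\cG)}^{(p+2)/2}$ for all $u\in H^1(\cG)$. On $S_\mu$ this yields, uniformly for $\rho\le 1$,
\[
E_\rho(u,\cG)\ge E_1(u,\cG)\ge \tfrac12\|u'\|_{L^2(\cG)}^2-\tfrac{C}{p}\mu^{(p+2)/4}\|u'\|_{L^2(\cG)}^{(p-2)/2}=:g\bigl(\|u'\|_{L^2(\cG)}\bigr).
\]
Since $p>6$ forces $(p-2)/2>2$, the function $g(t)=\tfrac12 t^2-c\,t^{(p-2)/2}$ (with $c=\tfrac{C}{p}\mu^{(p+2)/4}>0$) is strictly positive on an interval $(0,t_0)$; I fix $r\in(0,t_0)$ and set $m:=g(r)>0$. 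Thus $E_\rho(u,\cG)\ge m$ for every $\rho\in[\tfrac12,1]$ whenever $\|u'\|_{L^2(\cG)}=r$.

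Next I construct the two endpoints, both independent of $\rho$. For $w_2$ I take a fixed profile $\phi$ compactly supported in the interior of one bounded edge of $\mathcal{K}$ with $\|\phi\|_{L^2}^2=\mu$ (this edge exists since $\mathcal{K}\ne\emptyset$), and I use the mass-preserving rescaling $\phi_\sigma(x)=\sigma^{1/2}\phi(\sigma x)$, for which $\|\phi_\sigma\|_{L^2}^2=\mu$, $\|\phi_\sigma'\|_{L^2}^2=\sigma^2\|\phi'\|_{L^2}^2$ and $\int_{\mathcal{K}}|\phi_\sigma|^p=\sigma^{(p-2)/2}\int\phi^p$; the support concentrates and stays inside the edge, and $E_{1/2}(\phi_\sigma,\cG)\to-\infty$ as $\sigma\to+\infty$. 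I pick $\sigma$ large so that $w_2:=\phi_\sigma$ satisfies $E_{1/2}(w_2,\cG)<0$ and $\|w_2'\|_{L^2(\cG)}>r$. For $w_1$ I exploit the noncompactness of $\cG$: on a half-line I spread a plateau of total mass $\mu$ over a long interval supported away from $\mathcal{K}$, so that $\int_{\mathcal{K}}|w_1|^p=0$ while $\|w_1'\|_{L^2(\cG)}$ is as small as I wish; I choose it with $\|w_1'\|_{L^2(\cG)}<r$ and $E_{1/2}(w_1,\cG)=\tfrac12\|w_1'\|_{L^2(\cG)}^2<m$. By the monotonicity in $\rho$, both $E_\rho(w_1,\cG)<m$ and $E_\rho(w_2,\cG)<0<m$ hold for all $\rho\in[\tfrac12,1]$.

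Finally I conclude with a connectedness argument. As $S_\mu$ is path-connected, $\Gamma\ne\emptyset$, and for any $\gamma\in\Gamma$ the continuous map $t\mapsto\|\gamma(t)'\|_{L^2(\cG)}$ runs from $\|w_1'\|_{L^2(\cG)}<r$ to $\|w_2'\|_{L^2(\cG)}>r$; by the intermediate value theorem there is $t^\ast$ with $\|\gamma(t^\ast)'\|_{L^2(\cG)}=r$, whence $E_\rho(\gamma(t^\ast),\cG)\ge m$ by the barrier. Therefore $\max_{t}E_\rho(\gamma(t),\cG)\ge m$ for every $\gamma$, so $c_\rho\ge m>\max\{E_\rho(w_1,\cG),E_\rho(w_2,\cG)\}$ for all $\rho\in[\tfrac12,1]$, which is the claim. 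The only genuine point of care is the uniformity in $\rho$, and this is exactly what the sandwich $E_1\le E_\rho\le E_{1/2}$ supplies: the barrier is checked at $\rho=1$ and the two endpoint bounds at $\rho=\tfrac12$. I do not anticipate a deeper obstacle beyond this bookkeeping, the supercriticality $p>6$ being precisely what makes $g$ positive near $0$ and the rescaling push the energy to $-\infty$.
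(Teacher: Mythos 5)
Your proposal is correct and follows essentially the same route as the paper: a Gagliardo--Nirenberg barrier at a fixed gradient-norm level (uniform in $\rho$ because $\rho\le 1$), an endpoint $w_1$ with small gradient built on a half-line, an endpoint $w_2$ obtained by mass-preserving concentration on a bounded edge driving the energy negative, and the intermediate value theorem forcing every path to cross the barrier. The explicit sandwich $E_1\le E_\rho\le E_{1/2}$ is just a packaging of the same uniformity the paper tracks by keeping $\rho$ in its estimates.
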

\begin{proof}
For any $\mu, k>0$, denote
\begin{eqnarray*}
&A_{\mu,k}:=\{u\in H_{\mu}^1(\mathcal{G})\big| \int_{\mathcal{G}}|u'|^2dx<k\},\\
&\partial A_{\mu,k}:=\{u\in H_{\mu}^1(\mathcal{G})\big| \int_{\mathcal{G}}|u'|^2dx=k\}.
\end{eqnarray*}
First note that, since the graph is noncompact, for any $\mu, k>0$, we have $A_{\mu,k} \neq \emptyset$ (and, similarly, $A_{\mu,k} \neq \emptyset$). Indeed, take any function $v \in C^\infty_c(\R)$ with $\|v\|_{L^2(\R)}^2 =\mu$, and consider $v_t(x) = t^{\frac12}v(tx)$, for $t>0$. We have that $\|v_t\|_{L^2(\R)}^2 = \mu$ and $\|v_t'\|_{L^2(\R)}^2 = t^2\|v'\|_{L^2(\R)}^2$ for every $t>0$. Since $\cG$ contains at least a half-line, it contains arbitrarily large intervals. Therefore, any function $v_t$ can be regarded as a function in $H^1_\mu(\cG)$, still denoted by $v_t$, with the support entirely located on the half-line $l_1$; and, in particular, $v_t \in A_{\mu,k}$ for $t$ small enough.

Now, by the Gagliardo-Nirenberg inequality on metric graphs (see Proposition 2.1 in \cite{AST-JFA2016}), we obtain
\begin{align}\label{11-21-1} E_{\rho}(u,\mathcal{G})&\ge\frac{1}{2}\|u'\|_{L^2(\mathcal{G})}^2- \rho\frac{C_p}{p}\mu^{\frac{p}{4}+\frac{1}{2}}\|u'\|_{L^2(\mathcal{G})}^{\frac{p}{2}-1}, \qquad \forall u\in H^1_{\mu}(\cG).
\end{align}
Then, for any $\mu>0$ and $u\in \partial A_{\mu, k_0}$ with $ k_0 = \frac{1}{2}(\frac{p}{2C_p})^{\frac{4}{p-6}}\mu^{-\frac{p+2}{p-6}}$, we have
\begin{eqnarray}\label{3-7-2}
\inf\limits_{u\in \partial A_{\mu,k_0}}E_{\rho}(u,\mathcal{G})\ge k_0\left(\frac12-\frac{C_p}{p} \mu^{\frac{p+2}{4}} k_0^{\frac{p-6}{4}}\right) =: \alpha>0,
\end{eqnarray}
for every $\rho \in [1/2,1]$.

Next, observe that for any $u \in A_{\mu,k}$ 
\begin{eqnarray*}
E_{\rho}(u,\mathcal{G})\le\frac{1}{2}\int_{\mathcal{G}}|u'|^2\,dx\le\frac{1}{2}k.
\end{eqnarray*}
Thus, recalling that $A_{k,\mu} \neq \emptyset$ for all $\mu, k >0$, it is possible to choose a $w_1 \in H^1_\mu(\cG)$ such that
\begin{equation}\label{mpw1}
\|w_1'\|_{L^2(\cG)}^2 <k_0 \quad \text{and} \quad E_\rho(w_1, \cG) \le \frac{\alpha}2 \quad \forall \rho \in \left[\frac12,1\right].
\end{equation}
Moreover, we also observe that we can identify any bounded edge, say ${\mathrm e_1}$, with the interval $[-\ell_{1}/2, \ell_{1}/2]$. It follows that any compactly supported $H^1$ function $w$ on such interval, with mass $\mu$, can be seen as a function in $H^1_\mu(\cG)$. Defining $w_t(x) := t^{1/2} w(t x)$, with $t >1$, it is not difficult to check that $w_t \in H^1_\mu(\cG)$ (notice in particular that the support of $w_t$ is shrinking on ${\rm{e}_1}$ as $t$ becomes larger),
and that
\begin{align*}
E_\rho(w_t, \cG) &= \frac{t^2}{2} \int_{{\mathrm e_1}} |w'|^2\,dx- \frac{\rho t^{\frac{p-2}{2}}}{p}\int_{{\mathrm e_1}} |w|^p\,dx  \le  \frac{t^2}2\left( \int_{{\mathrm e_1}} |w'|^2\,dx - \frac{t^{\frac{p-6}{2}}}{p}\int_{{\mathrm e_1}} |w|^p\,dx\right),
\end{align*}
for every $\rho \in [\frac{1}{2}, 1]$.
Since $p>6$, then the right-hand side tends to $-\infty$ as $t \to +\infty$, and in particular there exists $t_2>0$ large enough such that $\|w'_t\|^2_{L^2(\mathcal{G})}=t^2 \|w'\|_{L^2(\cG)}^2>2k_0$ and $E_{\rho}(w_{t},\mathcal{G})<0$ for all $t> t_2$ and $\rho\in[\frac{1}{2}, 1]$. We set $w_2:=w_{2t_2}$, and we point out that
\begin{equation}\label{mpw2}
\|w_2'\|_{L^2(\cG)}^2 >2k_0 \quad \text{and} \quad E_\rho(w_2, \cG) <0\quad \forall \rho \in \left[\frac12,1\right].
\end{equation}
At this point the thesis follows easily. Let $\Gamma$ and $c_\rho$ be defined as in the statement of the lemma for our choice of $w_1$ and $w_2$; the fact that $\Gamma \neq 0$ is straightforward, since 
\[
\gamma_0(t):=\frac{\mu^{1/2}}{\| (1-t) w_1 + t w_2\|_{L^2(\cG)}}  \left[(1-t) w_1 + t w_2\right] \qquad t \in [0,1]
\]
belongs to $\Gamma$. By \eqref{mpw1} and \eqref{mpw2}, we note that for every $\gamma \in \Gamma$ there exists $t_\gamma \in [0,1]$ such that $\gamma(t_\gamma) \in \pa A_{\mu, k_0}$, by continuity. Therefore, for any $\rho \in [1/2,1]$, we have for every $\gamma \in \Gamma$ 
\[
\max_{t \in [0,1]} E_\rho(\gamma(t),\cG) \ge E_\rho(\gamma(t_\gamma),\cG) \ge \inf_{u \in \pa A_{\mu, k_0}} E_\rho(u, \cG) \geq  \alpha
\]
(see \eqref{3-7-2}) and thus $c_\rho \ge \alpha$  while
\[
\max\{E_\rho(w_1,\cG), E_\rho(w_2,\cG)\} = E_\rho(w_1,\cG) < \frac{\alpha}2. \qedhere
\] 
\end{proof}

The following lemma, together with Lemma \ref{*lem-Louis1},  will be central to establish the convergence of the Palais-Smale sequences provided by the application of Theorem \ref{thm: monot trick second order}.
\begin{lemma}\label{L-eigenvalue}
For any $\lambda <0$,  there exists a subspace $Y$ of $H^1(\mathcal{G})$ with dim $Y=3$ such that
\begin{equation}\label{9-12-1}
\int_{\mathcal{G}}|w'|^2 \,dx+\lambda \int_{\mathcal{G}}|w|^2 \,dx \leq \frac{\lambda}{2} \|w\|_{H^1(\cG)}^2,  \qquad \forall \, w \in Y.
\end{equation}
\end{lemma}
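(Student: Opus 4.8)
The plan is to reduce \eqref{9-12-1} to a statement about the Rayleigh quotient $\|w'\|_{L^2(\cG)}^2 / \|w\|_{L^2(\cG)}^2$, and then to exploit the non-compactness of $\cG$ (the presence of a half-line) to build a three-dimensional subspace on which this quotient is uniformly small. Writing $\|w\|_{H^1(\cG)}^2 = \|w'\|_{L^2(\cG)}^2 + \|w\|_{L^2(\cG)}^2$ and rearranging, for $w\neq 0$ the desired inequality is equivalent to
\[
\Big(1-\frac{\lambda}{2}\Big)\|w'\|_{L^2(\cG)}^2 \le -\frac{\lambda}{2}\,\|w\|_{L^2(\cG)}^2,
\]
i.e., since $\lambda<0$,
\[
\frac{\|w'\|_{L^2(\cG)}^2}{\|w\|_{L^2(\cG)}^2} \le \frac{-\lambda}{2-\lambda}=:R_\lambda>0.
\]
Thus it suffices to produce a three-dimensional $Y\subset H^1(\cG)$ on which the Rayleigh quotient never exceeds the positive constant $R_\lambda$.

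Next I would construct $Y$ from dilated, translated bumps supported on a single half-line. Fix a half-line $l_1\simeq[0,+\infty)$ and a nonzero $\phi\in C^\infty_c((0,1))$. For $\sigma>0$ and $j=0,1,2$, set $\phi_j(x):=\phi(\sigma x-j)$ on $l_1$, extended by zero to the rest of $\cG$. Each $\phi_j$ is supported in $(j/\sigma,(j+1)/\sigma)$, so the three supports are pairwise disjoint and contained in $(0,3/\sigma)$; since $\phi$ vanishes near the endpoints of $(0,1)$, the $\phi_j$ vanish near the vertex $0$ and hence define admissible elements of $H^1(\cG)$, and they are linearly independent, so $Y:=\mathrm{span}\{\phi_0,\phi_1,\phi_2\}$ has dimension $3$. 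A direct scaling computation gives, independently of $j$,
\[
\|\phi_j'\|_{L^2(\cG)}^2=\sigma\,\|\phi'\|_{L^2(\R)}^2, \qquad \|\phi_j\|_{L^2(\cG)}^2=\sigma^{-1}\|\phi\|_{L^2(\R)}^2.
\]

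Finally, because the supports are pairwise disjoint, all cross terms vanish, so for $w=\sum_{j}c_j\phi_j$ one has $\|w'\|_{L^2(\cG)}^2=\sigma\|\phi'\|_{L^2(\R)}^2\sum_j c_j^2$ and $\|w\|_{L^2(\cG)}^2=\sigma^{-1}\|\phi\|_{L^2(\R)}^2\sum_j c_j^2$. Consequently the Rayleigh quotient equals the \emph{constant} $\sigma^2\|\phi'\|_{L^2(\R)}^2/\|\phi\|_{L^2(\R)}^2$ for every nonzero $w\in Y$. Choosing $\sigma$ small enough that $\sigma^2\|\phi'\|_{L^2(\R)}^2/\|\phi\|_{L^2(\R)}^2\le R_\lambda$ then yields \eqref{9-12-1} for all $w\in Y$. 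There is no substantial obstacle here: the lemma is essentially a consequence of non-compactness, and the only points requiring care are (i) arranging that the bumps vanish near the vertex so that they are genuine $H^1(\cG)$ functions, and (ii) using the disjointness of supports to control the quotient \emph{uniformly} on all of $Y$, rather than merely on the spanning vectors.
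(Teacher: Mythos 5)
Your proof is correct and follows essentially the same route as the paper: three disjointly supported, rescaled bumps on a half-line, with the disjointness of supports giving a uniform (in fact constant) bound on the relevant quotient over the whole span, and $\sigma$ small to conclude. The only cosmetic difference is that you first rewrite \eqref{9-12-1} as a Rayleigh-quotient bound and skip the $L^2$-normalization of the bumps, whereas the paper normalizes $\int_{\cG}|\psi_{j,\sigma}|^2\,dx=1$ and computes the quotient $(\sigma^2\alpha+\lambda)/(\sigma^2\alpha+1)$ directly; both are equivalent.
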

\begin{proof}
Take $\phi\in C_0^{\infty}(\mathbb{R}^+)$ with ${\rm{supp}}\,\phi \subset [\frac{3}{4}, \frac{5}{4}]$ such that $\int_0^{+\infty} |\phi|^2dx=1$. Recalling that $l_1$ is an arbitrary half-line of $\cG$, we define the following functions for $\sigma>0$:
\begin{eqnarray*}\psi_{0,\sigma}(x)= \left\{
\begin{array}{ll}
0~~& \mbox{for}~x\in \mathcal{\cG} \setminus l_1\\
\sigma^{\frac{1}{2}}\phi(\sigma x)~~&~\mbox{for}~x\in l_1,
\end{array}
\right.
\end{eqnarray*}
\begin{eqnarray*}\psi_{1,\sigma}(x)= \left\{
\begin{array}{ll}
0~~& \mbox{for}~x\in \mathcal{\cG} \setminus l_1\\
\sigma^{\frac{1}{2}}\phi(\sigma x -1 )~~&~\mbox{for}~x\in l_1,
\end{array}
\right.
\end{eqnarray*}
and
\begin{eqnarray*}\psi_{2,\sigma}(x)= \left\{
\begin{array}{ll}
0~~& \mbox{for}~x\in \mathcal{\cG} \setminus l_1\\
\sigma^{\frac{1}{2}}\phi(\sigma x -2 )~~&~\mbox{for}~x\in l_1.
\end{array}
\right.
\end{eqnarray*}
Clearly, for any $\sigma>0$, $\int_{\mathcal{G}} |\psi_{j,\sigma}|^2\,dx=1$ for all $j=0,1,2$, and the three functions are mutually orthogonal in $H^1(\mathcal{G})$, having disjoints supports.  Assume that $w=\sum\limits_{j=0}^2\theta_j\psi_{j,\sigma}$. Then
\[
\begin{split}
\int_{\mathcal{G}}|w'|^2 \, dx+\lambda \int_{\mathcal{G}}|w|^2 \,dx
& = \sigma^2 \Big( \sum_{j=0}^2 \theta_j^2 \int_0^{+\infty} |\phi'|^2\,dx \Big) + \lambda \Big( \sum_{j=0}^2 \theta_j^2 \int_0^{+\infty} |\phi|^2\,dx \Big) \\
& = (\sigma^2 \alpha + \lambda) \sum_{j=0}^2 \theta_j^2,
\end{split}
\]
where $\alpha:= \int_0^{+\infty} |\phi'|^2\,dx>0$. Similarly, $\|w\|_{H^1(\cG)}^2 = (\sigma^2 \alpha + 1) \sum_{j=0}^2 \theta_j^2$. Therefore, provided that $w \neq 0$,
\[
\frac{\int_{\mathcal{G}}|w'|^2 \, dx+\lambda \int_{\mathcal{G}}|w|^2 \,dx}{\|w\|_{H^1(\cG)}^2} = \frac{\sigma^2 \alpha + \lambda}{\sigma^2 \alpha + 1} \le \frac{\lambda}{2}
\]
for every $\sigma>0$ sufficiently small, and for every $\theta_0, \theta_1, \theta_2 \in \R$.
\end{proof}

\begin{remark} The validity of the previous lemmata relies on the existence of at least one half-line, thus on the non-compact character of the graph. Results like Lemma \ref{L-eigenvalue} are expected to hold broadly in problems set on a non-compact domain.
\end{remark}
Finally, we recall a definition. In what follows, $\chi_A$ denotes the characteristic function of $A$.
 \begin{definition}\label{def: morse}
For any graph $\mathcal{F}$, any subgraph $\mathcal{F'} \subset \mathcal{F}$, and any solution $U \in C(\mathcal{F}) \cap H^1_{{\rm loc}}(\mathcal{F})$, not necessarily in $H^1(\mathcal{F})$, of
\begin{equation}\label{eq U}
\begin{cases}
-U'' + \lambda U = \rho |U|^{p-2} U \chi_{\mathcal{F}'} & \text{in $\mathcal{F}$}, \\
\sum_{{\rm e} \succ {\rm v}} U'({\rm v}) = 0 &  \text{for any vertex ${\rm v}$ of $\mathcal{F}$},
\end{cases}
\end{equation}
with $\lambda, \rho \in \R$, we consider
\begin{equation}\label{second differential}
Q(\varphi;U, \mathcal{F}):= \int_{\mathcal{F}} \left(|\varphi'|^2 + (\lambda-(p-1)\rho|U|^{p-2} \chi_{\mathcal{F'}}) \varphi^2\right)\,dx, \quad \forall \varphi \in H^1(\mathcal{F}) \cap C_c(\mathcal{F}).
\end{equation}
The \emph{Morse index of $U$}, denoted by $m(U)$, is the maximal dimension of a subspace $W \subset H^1(\mathcal{F}) \cap C_c(\mathcal{F})$ such that $Q(\varphi; U, \mathcal{F})<0$ for all $\varphi \in W \setminus \{0\}$.
\end{definition}


The main aim of this section is to prove the following result.


\begin{proposition}\label{prop: ex for ae rho}
For any fixed $\mu>0$ and almost every $\rho \in I_{\rho}$, there exists $(u_\rho, \lambda_{\rho})\in H^1_\mu(\cG)\times \mathbb{R}^+$ which solves
\begin{equation}\label{pb rho}
\begin{cases}
-u_\rho'' + \lambda_\rho u_\rho = \rho \kappa(x) u_\rho^{p-1}, \quad u_\rho>0 & \text{in $\mathcal{\cG}$}, \\
\sum_{{\rm e} \succ {\rm v}} u_\rho'({\rm v}) = 0 &  \text{for any vertex ${\rm v}$}.
\end{cases}
\end{equation}
 Moreover, $E_{\rho}(u_{\rho},\cG)=c_{\rho}$ and $m(u_\rho) \le 2$.
\end{proposition}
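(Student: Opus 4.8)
The plan is to realize $u_\rho$ as the strong $H^1$-limit of the Palais--Smale sequence produced by Theorem \ref{thm: monot trick second order}, the whole difficulty lying in recovering compactness on the noncompact graph. I would first write $E_\rho=A-\rho B$ with $A(u)=\tfrac12\int_\cG|u'|^2$ and $B(u)=\tfrac1p\int_{\mathcal K}|u|^p\ge0$; the regularity and structural hypotheses are routine, while the uniform geometry \eqref{mp geom} is precisely Lemma \ref{MP-geometry}. Hence for a.e. $\rho\in[\tfrac12,1]$ there are $\zeta_n\to0^+$ and a bounded sequence $\{u_n\}=\{u_{\rho,n}\}\subset S_\mu$, which I may take nonnegative, with $E_\rho(u_n)\to c_\rho$, $\|E_\rho'|_{S_\mu}(u_n)\|_*\to0$ and $\tilde m_{\zeta_n}(u_n)\le1$. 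The first two properties give \eqref{free-gradient}--\eqref{def-almost-Lagrange}, and boundedness of $\{u_n\}$ together with the Gagliardo--Nirenberg control of $\int_{\mathcal K}|u_n|^p$ makes $\{\lambda_n\}$ bounded.

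The crucial point is the sign of $\lambda_n$. Since $S_\mu$ has codimension one, the bound $\tilde m_{\zeta_n}(u_n)\le1$ on $T_{u_n}S_\mu$ (through the remark following \eqref{L-Hessian}) yields hypothesis (i) of Lemma \ref{*lem-Louis1}: every subspace of $E$ on which $E_\rho''(u_n)[\cdot,\cdot]+\lambda_n|\cdot|^2$ is $\zeta_n$-negative has dimension at most $2$. For hypothesis (ii) with an arbitrary $\lambda<0$ I would use the space $Y$ of Lemma \ref{L-eigenvalue}: for $\sigma$ small its generators are supported in a window of the half-line $l_1$ disjoint from $\mathcal K$, so the nonlinear term drops out and $E_\rho''(u_n)[\varphi,\varphi]+\lambda|\varphi|^2=\int_\cG|\varphi'|^2+\lambda\int_\cG|\varphi|^2\le\tfrac\lambda2\|\varphi\|_{H^1}^2$ by \eqref{9-12-1}, i.e. \eqref{L1*} with $a=-\lambda/2$. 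Lemma \ref{*lem-Louis1} then forces $\liminf_n\lambda_n\ge0$, so along a subsequence $\lambda_n\to\lambda_\rho\ge0$.

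For the compactness, $u_n\rightharpoonup u_\rho$ in $H^1(\cG)$ and, since $H^1(\mathcal K)\hookrightarrow C(\mathcal K)$ compactly, $u_n\to u_\rho$ uniformly on $\mathcal K$; passing to the limit shows $u_\rho\ge0$ solves \eqref{pb rho} weakly with multiplier $\lambda_\rho$. I would then exclude $u_\rho\equiv0$: otherwise $\int_{\mathcal K}|u_n|^p\to0$, so $\|u_n'\|_{L^2}^2\to2c_\rho>0$ and $\lambda_n\to-2c_\rho/\mu<0$, against $\liminf\lambda_n\ge0$ --- this is exactly where the sign information pays off. Next, $\lambda_\rho=0$ is impossible: on each half-line the equation is $-u_\rho''=0$, forcing $u_\rho\equiv0$ there; at a vertex joining a half-line to $\mathcal K$ one has $u_\rho=0$, and nonnegativity with the Kirchhoff condition makes every outgoing derivative vanish, so ODE uniqueness propagates $u_\rho\equiv0$ over the connected graph, a contradiction. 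Thus $\lambda_\rho>0$.

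With $\lambda_\rho>0$ the convergence is routine: testing $E_\rho'(u_n)+\lambda_n(u_n,\cdot)$ against $u_n-u_\rho$ and subtracting the vanishing pairing of the limit equation, the nonlinear part tends to $0$ by uniform convergence on $\mathcal K$, leaving $\|u_n'-u_\rho'\|_{L^2}^2+\lambda_n\|u_n-u_\rho\|_{L^2}^2\to0$; as $\lambda_n\to\lambda_\rho>0$, both terms vanish and $u_n\to u_\rho$ in $H^1(\cG)$. Hence $u_\rho\in S_\mu$, $E_\rho(u_\rho)=c_\rho$, and $u_\rho>0$ by the strong maximum principle. Finally, the form of Definition \ref{def: morse} equals $E_\rho''(u_\rho)[\varphi,\varphi]+\lambda_\rho|\varphi|^2$, so strong convergence together with $\zeta_n\to0$ makes any hypothetical $\ge3$-dimensional subspace of $C_c\cap H^1$ on which it is negative contradict the dimension bound (i) of Lemma \ref{*lem-Louis1} for large $n$; therefore $m(u_\rho)\le2$. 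I expect the compactness step --- excluding vanishing and, above all, upgrading $\lambda_\rho\ge0$ to $\lambda_\rho>0$, which is precisely what stops mass from leaking to the noncompact ends --- to be the main obstacle.
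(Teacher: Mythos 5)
Your proposal is correct and follows essentially the same route as the paper: Theorem \ref{thm: monot trick second order} with the geometry of Lemma \ref{MP-geometry}, the combination of Lemma \ref{L-eigenvalue} and Lemma \ref{*lem-Louis1} to force $\liminf\lambda_n\ge 0$, exclusion of vanishing and of $\lambda_\rho=0$ via the behaviour on the half-lines, strong convergence from $\lambda_\rho>0$, and transfer of the approximate Morse bound to the limit by H\"older continuity of $E_\rho''$. The only (harmless) variations are cosmetic: you rule out $u_\rho\equiv 0$ directly from $\lambda_n\to -2c_\rho/\mu<0$ instead of the paper's identity \eqref{C-PS'} with a case distinction, and you organize the positivity/$\lambda_\rho>0$ steps in a slightly different order.
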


\begin{proof}
For simplicity, we omit the dependence of the functionals $E_\rho(\cdot\,\cG)$ on $\cG$.
We apply Theorem \ref{thm: monot trick second order} to the family of functionals $E_\rho$, with $E=H^1(\cG)$, $H=L^2(\cG)$, $S_\mu = H^1_\mu(\cG)$, and
$\Gamma$ defined in Lemma \ref{MP-geometry}. Setting
\[
A(u) = \frac12\int_{\cG} |u'|^2\,dx \quad \text{and} \quad  B(u) = \frac{\rho}{p}\int_{\cG} |u|^p\,dx,
\]
assumption \eqref{hp coer} holds, since we have that
\[
u \in H^1_\mu(\cG), \ \|u\| \to +\infty \quad \implies \quad A(u) \to +\infty.
\]
Let $E'_{\rho}$ and $E''_{\rho}$ denote respectively the free first and second derivatives of $E_{\rho}$. Clearly, $E'_{\rho}$ and $E''_{\rho}$ are both of class $C^1$, and hence locally H\"older continuous, on $H_\mu^1(\cG)$, which implies that assumption \eqref{Holder} holds. \medskip

Thus, taking into account Lemma \ref{MP-geometry}, by Theorem \ref{thm: monot trick second order}  and the considerations just after it, for almost every $\rho \in [1/2,1]$, there exist a bounded sequence $\{u_{n, \rho}\} \subset H_\mu^1(\cG)$, that we shall denote simply by $\{u_n\}$ from now on, and a sequence $\{\zeta_n\}\subset \mathbb{R}^+$ with $\zeta_n\to 0^+$, such that
\begin{equation}\label{const crit}
E'_{\rho}(u_n) +  \lambda_n u_n \to 0 \quad \mbox{in the dual of} \quad H^1_{\mu}(\mathcal{G}),
\end{equation}
where
\begin{equation}\label{lambda}
\lambda_n:= -\frac{1}{\mu} E'_\rho(u_n) u_n.
\end{equation}
Moreover, if the inequality
\begin{eqnarray}\label{L-Hess crit}
\int_{\cG} \left[ |\varphi'|^2 + \left( \lambda_n -(p-1)\rho \kappa(x)|u_n|^{p-2}\right) \varphi^2\right]  \, dx = E''_\rho(u_n)[\varphi, \varphi] + \lambda_n ||\varphi||_{L^2(\mathcal{G})}^2 < -\zeta_n \|\varphi\|_{H^1(\cG)}^2
\end{eqnarray}
holds for any $\varphi \in W_n \setminus \{0\}$ in a subspace $W_n$ of $T_{u_n} S_\mu$, then the dimension of $W_n$ is at most $1$.
In addition, as explained in \cite[Remark 1.4]{BCJS-2022}, since $u \in H^1_\mu(\cG) \ \Longrightarrow  |u| \in H^1_\mu(\cG)$, $w_1, w_2 \ge 0$, the map $u \mapsto |u|$ is continuous, and $E_\rho(u) = E_\rho(|u|)$, it is possible to choose $\{u_n\}$ with the property that $u_n \ge 0$ on $\cG$. \medskip

The sequence $\{u_n\}$ being bounded,  it follows by (\ref{lambda}) that $\{\lambda_n\}$ is bounded. Then, passing to a subsequence, there exists $\lambda_\rho\in \mathbb{R}$ such that $\lim\limits_{n\to+\infty}\lambda_n=\lambda_{\rho}$.
Furthermore, there exists $u_\rho\in H^1(\mathcal{G})$ such that
 \begin{eqnarray}
 &&u_{n}\rightharpoonup u_{\rho}~~\mbox{in}~~ H^1(\mathcal{G}),\label{3-5-1}\\
 &&u_n\to u_{\rho}~~\mbox{in}~~L_{{\rm{loc}}}^r(\mathcal{G}), r>2,\label{3-5-2}\\
 &&u_{n}(x)\to u_{\rho}(x)~~\mbox{for a.e.}~ x\in \mathcal{G},\label{3-5-3}
 \end{eqnarray}
which implies that $u_{\rho}\ge0$. Also, recalling (\ref{const crit}) and that $\lim\limits_{n\to+\infty}\lambda_n=\lambda_{\rho}$, we see that $u_{\rho}$ satisfies
\begin{equation}\label{e1}
- u_{\rho}'' + \lambda_{\rho} u_{\rho}=\rho\kappa(x) u_{\rho}^{p-1}
\end{equation}
with the Kirchhoff condition at the vertices. In order to show that $u_{\rho} \not \equiv 0$ we shall first prove that $\lambda_{\rho}\ge0$. Here the Morse type information \eqref{L-Hess crit} proves decisive. Firstly, since the codimension of $T_{u_n} S_\mu$ is $1$, we infer that if the inequality \eqref{L-Hess crit} holds for every $\varphi \in V_n \setminus \{0\}$ for a subspace $V_n$ of $H^1(\cG)$, then the dimension of $V_n$ is at most $2$. Secondly, by Lemma \ref{L-eigenvalue}, we have that for every $\lambda<0$, there exist a subspace $Y$ of $E$ with $\dim Y \geq 3$ and $a>0$ such that, for $n \in \mathbb{N}$ large,
\[
\begin{split}
E''_{\rho}(u_n)[\varphi,\varphi] + \lambda |\varphi|^2 &\leq \int_{\mathcal{G}}|\varphi'|^2 \,dx+\lambda \int_{\mathcal{G}}|\varphi|^2 \le -a \|\varphi\|^2,  \qquad \mbox{for all } \, \varphi \in Y \backslash \{0\}.
\end{split}
\]
Therefore, Lemma \ref{*lem-Louis1} implies that $\lambda_\rho \ge 0$.

\medskip

 Now from (\ref{const crit}) and the fact that $\lambda_n \to \lambda_\rho$, we deduce that
\[
\int_{\cG} \left(u_n' \varphi'+\lambda_\rho  u_n \varphi\right) \,dx - \rho \int_{\mathcal{K}} u_n^{p-1} \varphi\,dx =o(1) \|\varphi\|_{H^1(\cG)}.
\]
Moreover, by \eqref{e1}, 
\[
\int_{\cG} \left(u_\rho' \varphi'+\lambda_\rho  u_\rho \varphi\right) \,dx - \rho \int_{\mathcal{K}} u_\rho^{p-1} \varphi\,dx =0
\]
Therefore, taking the difference and exploiting \eqref{3-5-1}-\eqref{3-5-3} we infer that
\begin{equation}\label{C-PS'}
\int_{\mathcal{G}}|(u_n-u_\rho)'|^2\, dx +\lambda_{\rho} \int_{\mathcal{G}}| u_n-u_\rho|^2 \,dx \to 0
\end{equation}
as $n \to \infty$. If we assume that $u_{\rho}  \equiv 0$ then we deduce that 
\begin{equation}\label{e3}
\int_{\mathcal{G}} |u_n'|^2 \,dx + \lambda_{\rho} \int_{\mathcal{G}} |u_n|^2 \,dx   \to 0.
\end{equation}
If $\lambda_{\rho} >0$, this is not possible since $\|u_n\|_{L^2(\cG)}^2=\mu>0$. If $\lambda_{\rho} =0$, then (\ref{e3}) contradicts the fact that the mountain-pass level satisfies $c_{\rho} >0$. Hence, $u_{\rho} \not \equiv 0$. Appealing to the Kirchhoff condition and the uniqueness theorem for ODEs, we have in fact that $u_\rho >0$ in $\cG$: indeed, assume by contradiction that there exists $x_0\in \mathcal{G}$ such that $u_{\rho}(x_0)=0$. If $x_0$ stays in the interior of some edge, then by $u_{\rho}\ge0$ on $\mathcal{G}$ it follows that $u'_{\rho}(x_0)=0$. If instead $x_0$ is a vertex, then by $u_{\rho}(x)\ge0$ and by the Kirchhoff condition we also get $u'_{\rho}(x_0)=0$. Hence, by uniqueness, $u_{\rho}\equiv 0$ on any edge containing $x_0$; but then, by repeating this argument a finite number of times (since $\cG$ has finitely many vertices and edges), we deduce that $u_\rho \equiv 0$ on $\cG$, which is the desired contradiction.

Now we claim that $\lambda_{\rho}>0$. If not, then $\lambda_{\rho}=0$. Identifying $l_1$ with $[0, +\infty)$, we have that $u_{\rho}$ is a $C^2$ function on $[0,+\infty)$ such that $u_{\rho}''(x)=0$ for all $x\in(0,+\infty)$. It follows that $u_\rho > 0$ on $[0,+\infty)$, 
\[
u_\rho(x) = u_\rho(0) + u_\rho'(0)x, \quad \text{and } \quad u_\rho \in L^2(\cG),
\]
which are incompatible. Hence the claim holds. %
%

Having proved that $\lambda_{\rho} >0$, we get from (\ref{C-PS'}) that $u_n \to u_{\rho}$ strongly in $H^1(\mathcal{G})$.

It remains to show that the Morse index $m(u_\rho)$ is at most $2$.  Since the tangent space $T_{u} H^1_\mu(\cG)$ has codimension 1, it suffices to show that $u_{\rho} \in \M$ has Morse index at most $1$ as a constrained critical point.
If not, in view of Definition \ref{def: app morse} we may assume by contradiction that there exists a $W_0 \subset T_{u}\M$ with $\dim W_0 =2$ such that
\begin{equation}\label{10-24-1}
D^2 E_{\rho}(u_{\rho})[w,w]<0 \quad \mbox{for all } w \in W_0 \backslash \{0\}.
\end{equation}
Then, since $W_0$ is of finite dimension, there exists $\beta>0$ such that
$$D^2 E_{\rho}(u_{\rho})[w,w]<-\beta \quad \mbox{for all } w \in W_0 \backslash \{0\}~~\mbox{with}~~\|w\|_{H^1(\mathcal{G})}=1,$$
using the homogeneity of $D^2 E_{\rho}(u_{\rho})$, we deduce that
\begin{equation*}
D^2 E_{\rho}(u_{\rho})[w,w]< -\beta||w||_{H^1(\mathcal{G})}^2 \quad \mbox{for all } w \in W_0 \backslash \{0\}.
\end{equation*}
Now, from \cite[Corollary 1]{BCJS-2022} or using directly that $E_\rho'$ and $E_\rho''$ are $\alpha$-H\"older continuous on bounded sets for some $\alpha \in (0,1]$, 
it follows that there exists $\delta_1>0$ small enough such that, for any $v\in\M$ satisfying $||v-u|| \leq \delta_1$,
\begin{equation}\label{L-conditionD2step1}
D^2E_{\rho}(v)[w,w]<-\frac{\beta}{2}||w||_{H^1(\mathcal{G})}^2\quad \mbox{for all } w \in W_0 \backslash \{0\}.
\end{equation}
Hence, noting that $||u_n-u_{\rho}||_{H^1(\mathcal{G})}\leq\delta_1$ for $n\in \mathbb{N}$ large enough, by \eqref{10-24-1}-\eqref{L-conditionD2step1} and $\zeta_n\to 0^+$ we get
\begin{equation}\label{10-24-2}
D^2E_{\rho}(u_n)[w,w]<-\frac{\beta}{2}||w||_{H^1(\mathcal{G})}^2<-\zeta_n||w||_{H^1(\mathcal{G})}^2\quad \mbox{for all } w \in W_0 \backslash \{0\}
\end{equation}
for any such large $n$. Therefore, since $\dim W_0>1$ and recalling that
$$E''_{\rho}(u_n)[w,w] + \lambda_{\rho}||w||_{L^2(\mathcal{G})}^2 = D^2E_{\rho}(u_n)[w,w],$$
 Equation \eqref{10-24-2} provides a contradiction with Equation \eqref{L-Hess crit}. 
Thus we infer that $\tilde{m}_0(u_\rho) \le 1$. 
\end{proof}

\section{Proof of Theorem \ref{thm: main ex} }\label{sec:4}

From Proposition \ref{prop: ex for ae rho} we know that there exists a sequence $\rho_n \to 1^-$, and a corresponding sequence of critical points $u_{\rho_n} \in H^1_\mu(\cG)$ of $E_{\rho_n}(\cdot\,, \cG)$ constrained to $H^1_\mu(\cG)$, lying at the mountain-pass level $c_{\rho_n}$, with a Morse index $m(u_{\rho_n})$ lesser or equal to $2$. Additionally, the associated Lagrange multipliers satisfy $\lambda_n >0$. \\
To obtain the existence of solutions for the original problem, at a strictly positive energy level, it clearly suffices to show that $\{u_{\rho_n}\}$ converges. In this direction the key point is to show that 
$\{u_{\rho_n}\}$ is bounded in $H^1(\cG)$.  

By Lemma \ref{MP-geometry} and the monotonicity of $c_{\rho}$,
\[
E_1(w_1, \cG) \le E_\rho(w_1, \cG) \le c_{\rho} \le c_{1/2}, \qquad \forall \rho \in \left[\frac12, 1 \right],
\]
which implies that $c_{\rho_n}$ is bounded. In addition, thanks to the Kirchhoff condition
\[
\int_{\cG} \left(|u_n'|^2 + \lambda_n u_n^2\right)dx = \int_{\mathcal{K}} |u_n|^p\,dx,
\]
whence it follows that
\begin{equation*}
\left( \frac12-\frac1p\right) \int_{\cG} |u_{n}'|^2\,dx = c_n + \frac{\lambda_n \mu}p.
\end{equation*}
Therefore, if $\{\lambda_n\}$ is bounded, then $\{u_{\rho_n}\}$ is bounded as well. We shall thus assume that  $\lambda_n \to +\infty$, up to extraction of a subsequence and, in order to reach a contradiction, develop a blow-up analysis for $\{u_{\rho_n}\}$ as in \cite{CJS-2022} (see also \cite{Espetal,EspPet,PieVer}).

Precisely, writing for simplicity $u_n := u_{\rho_n}$ for $\rho_n\to 1^-$, we consider the behavior of a sequence of solutions $\{u_n\} \in H^1(\cG)$ of the following problem:
\begin{equation}\label{eq: blow-up}
\begin{cases}
-u_n''+\lambda_n u_n = \rho_n \kappa(x) u_n^{p-1} & \text{on $\cG$}, \\
u_n>0 & \text{on $\cG$}, \\
\sum_{\rm{e} \succ \rm{v}} u_{e,n}'(\rm{v}) = 0& \forall \rm{v} \in \mathcal{V},
\end{cases}
\end{equation}
where $m(u_n) \leq 2$ for all $n  \in \mathbb{N}$, and it is assumed that $\lambda_n \to + \infty$.

Firstly, we note that the local maximum points of $u_n$ stay on the compact part $\mathcal{K}$ of $\cG$.
\begin{lemma}\label{lem: stima max}
Let $x_n \in \cG$ be a local maximum point of $u_n$. Then $x_n\in \mathcal{K}$, and
\[
u_n(x_n) \ge \lambda_n^\frac{1}{p-2}.
\]
\end{lemma}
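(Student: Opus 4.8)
The plan is to read off both conclusions from the pointwise structure of \eqref{eq: blow-up} at $x_n$, exploiting only the sign $\lambda_n>0$ and the bound $\rho_n\le 1$. I would first localize the maximum in $\mathcal{K}$, and then extract the quantitative estimate from the ODE satisfied on a single bounded edge through $x_n$.

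\emph{Localization.} Since $\cG$ is connected and $\mathcal{K}\neq\emptyset$, every finite vertex is incident to at least one bounded edge (a vertex meeting only half-lines would generate a connected component containing no bounded edge), hence every finite vertex lies in $\mathcal{K}$. Consequently $\cG\setminus\mathcal{K}$ consists exactly of the \emph{open} half-lines, and it suffices to rule out an interior maximum there. On any half-line $\kappa\equiv 0$, so the first equation in \eqref{eq: blow-up} reduces to $u_n''=\lambda_n u_n>0$; thus $u_n$ is strictly convex and can have no interior local maximum. Therefore $x_n\in\mathcal{K}$. (This is the same phenomenon recorded in Proposition \ref{prop: properties}, whose integration argument never uses $\rho=1$.)

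\emph{Reduction to one edge, and the main obstacle.} With $x_n\in\mathcal{K}$ I would fix a bounded edge $e\subset\mathcal{K}$ with $x_n\in\bar e$, on which $\kappa\equiv 1$, and prove the two relations $u_{e,n}'(x_n)=0$ and $u_{e,n}''(x_n)\le 0$. When $x_n$ is interior to $e$ these are simply the first- and second-order conditions for an interior maximum of the $C^2$ function $u_{e,n}$. The delicate case, and the point I expect to require the most care, is when $x_n$ is a \emph{vertex} of $\mathcal{K}$: maximality then yields only one-sided data, namely that each incident outer derivative satisfies $u_{e,n}'(x_n)\le 0$. Here the Kirchhoff condition in \eqref{eq: blow-up}, $\sum_{{\rm e}\succ x_n}u_{e,n}'(x_n)=0$, forces every outer derivative to vanish, in particular $u_{e,n}'(x_n)=0$ on the chosen bounded edge. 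Given $u_{e,n}'(x_n)=0$, the Taylor expansion $u_{e,n}(x)-u_{e,n}(x_n)=\tfrac12 u_{e,n}''(x_n)x^2+o(x^2)$ together with $u_{e,n}(x)\le u_{e,n}(x_n)$ for nearby $x$ yields $u_{e,n}''(x_n)\le 0$. It is precisely this interplay between the local-maximum sign conditions and the Kirchhoff balance that upgrades the one-sided information into $u_{e,n}'(x_n)=0$.

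\emph{Conclusion.} In either case, evaluating the equation on $e$ at $x_n$ (where $\kappa\equiv 1$) and using $-u_{e,n}''(x_n)\ge 0$ and $\rho_n\le 1$ gives
\[
\lambda_n u_n(x_n)\ \le\ -u_{e,n}''(x_n)+\lambda_n u_n(x_n)\ =\ \rho_n\, u_n(x_n)^{p-1}\ \le\ u_n(x_n)^{p-1}.
\]
Since $u_n(x_n)>0$, dividing by $u_n(x_n)$ yields $\lambda_n\le u_n(x_n)^{p-2}$, that is $u_n(x_n)\ge\lambda_n^{1/(p-2)}$, which completes the proof.
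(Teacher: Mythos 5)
Your proof is correct and follows essentially the same route as the paper: the localization uses the identical convexity/sign argument on the half-lines (where $-u_n''+\lambda_n u_n=0$ forces $u_n''>0$), and the quantitative estimate is the standard pointwise evaluation of the equation at the maximum point, which the paper delegates to \cite[Lemma 4.1]{CJS-2022}. Your handling of the vertex case — upgrading the one-sided outer-derivative inequalities to equalities via the Kirchhoff balance before invoking the second-order condition — is precisely the content of that cited lemma, so nothing is missing.
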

\begin{proof}
Let $x_n$ be in the interior of some half line $l_i\in \mathcal{E}$. In coordinates, $l_i \simeq [0, +\infty)$ and $x_n\in (0, +\infty)$, and the restriction of $u_n$ on $l_i$ is a $C^2$ function on $[0,+\infty)$, by regularity. Since $x_n$ is a local maximum point, we have $u''_n(x_n)\le0$, which implies that $\lambda_n u_n(x_n)\le0$. A contradiction. Thus $x_n\in \mathcal{K}$, and at this point the estimate can be proved as in \cite[Lemma 4.1]{CJS-2022}.
\end{proof}

In the following, we denote by $B_r(x_0)=\{x \in \cG: \ {\rm dist}(x,x_0)<r\}$. Moreover, we denote by $\cG_m$ the star-graph with $m \ge 1$ half-lines glued together at their common origin $0$. In particular, $\cG_1=\R^+$, and $\cG_2$ is isometric to $\R$.

Since $m(u_n)\le2$, we can describe the asymptotic behavior of the sequence $\{u_n\}$ near a local maximum point as $\lambda_n\to +\infty$. From now on, we denote by $\chi_A$ the characteristic function of a set $A$.

\begin{proposition}\label{thm: blow-up 1}
Let $x_n \in \cG$ be such that, for some $R_n \to \infty$,
\begin{equation*}\label{hp max}
u_n(x_n) = \max_{B_{R_n \tilde \eps_n}(x_n)} u_n \quad \text{where } \tilde \eps_n=(u_n(x_n))^{-\frac{p-2}{2}} \to 0.
\end{equation*}
Suppose moreover that
\begin{equation}\label{int max}
\limsup_{n \to \infty} \frac{{\rm{dist}}(x_n, \mathcal{V})}{\tilde \eps_n} = +\infty.
\end{equation}
Then, up to extraction of a subsequence, points ($i$)-($iv$) in \cite[Theorem 4.2]{CJS-2022} holds. 
If, instead of \eqref{int max}, we suppose that
\begin{equation}\label{ver max}
\limsup_{n \to \infty} \frac{{\rm{dist}}(x_n, \mathcal{V})}{\tilde \eps_n} <+\infty;
\end{equation}
then, up to extraction of a subsequence :
\begin{itemize}
\item[($i'$)] $x_n \to {\rm{v}} \in \mathcal{V}$, and all the $x_n$ lie on the same edge ${\rm{e}}_1 \simeq [0,\rho_1]$, where ${\rm{e}}_1\subset \mathcal{K}$ and the vertex ${\rm{v}}$ is identified by the coordinate $0$ on ${\rm{e}}_1$.
\item[($ii'$)] Let ${\rm{e}}_2 \simeq [0,r_2]$, \dots, ${\rm{e}}_j \simeq [0,r_j]$ be the other bounded edges of $\cG$ having ${\rm{v}}$ as a vertex (if any), and let $l_{j+1} \simeq [0,+\infty)$, \dots, $l_m \simeq [0,+\infty)$ be the half-lines of $\cG$ having $\mathrm{v}$ as a vertex (if any), where ${\rm{v}}$ is identified by the coordinate $0$ on each ${\rm{e}}_i$ and $l_i$. Setting $\eps_n = \lambda_n^{-\frac12}$, we have that
\begin{equation}\label{rel eps tilde eps 2}
\begin{split}
\frac{\tilde \eps_n}{\eps_n} &\to (0,1], \\
\limsup_{n \to \infty} &\frac{{\rm{dist}}(x_n, \mathcal{V})}{\eps_n} < +\infty,
\end{split}
\end{equation}
and the scaled sequence defined by
\[
v_n(y):= \eps_n^{\frac{2}{p-2}} u_n(\eps_n y) \quad \text{for }y \in \frac{{\rm{e}}_i}{\eps_n},  \text{ with $i=1,\dots,j$}, \quad \text{or for }y \in \frac{l_i}{\eps_n},  \text{ with $i=j+1,\dots,m$}
\]
converges to a limit $V$ in $C^0_{\rm{loc}}(\cG_m)$ as $n \to \infty$. Denoting by $V_i$ the restriction of $V$ to the $i$-th half-line $\ell_i$ of $\cG_m$, and by $v_{i,n}$ the restriction of $v_n$ to ${\rm e}_i/\eps_n$, we have moreover that $v_{i,n} \to V_i$ in $C^2_{\rm{loc}}([0,+\infty))$. Finally, $V \in H^1(\cG_m)$ is a positive solution to the NLS-type equation on the star-graph
\[
\begin{cases}
-V'' + V = V^{p-1} \chi_{\ell_1 \cup \dots \cup \ell_j}, \quad V>0 & \text{in $\cG_m$},\\
\sum_{i=1}^m V_i'(0^+) =0, \\
V(x) \to 0 & \text{as }\dist(x,0) \to \infty
\end{cases}
\]
with a global maximum point $\bar x$ located on $\ell_1$, whose coordinate is
\[
\bar x = \lim_{n \to \infty} \bar x_n \in [0,+\infty), \quad \text{where} \quad \bar x_n := \frac{{\rm{dist}}(x_n, \mathcal{V})}{\eps_n} .
\]
\item[($iii'$)] There exists $\phi_n \in H^1(\cG_m) \cap C_c(\cG_m)$, with ${\rm{supp}}\,\phi_n \subset B_{\bar R \eps_n}(x_n)$ for some $\bar R>0$, such that
\[
Q(\phi_n; u_n, \cG) <0.
\]
\item[($iv'$)] For all $R>0$ and $q \ge 1$, we have that
\[
\lim_{n \to \infty} \lambda_n^{\frac12- \frac{q}{p-2}} \int_{B_{R \eps_n}(x_n)} u_n^{q}\,dx =   \lim_{n \to \infty} \int_{B_R(\bar x_n)} v_n^q\,dy = \int_{[0, \bar x + R]} V_1^q\,dy + \sum_{i=2}^m \int_{[0, R- \bar x]} V_i^q\,dy  = \int_{B_R(\bar x)} V^q\,dy
\]
(where $B_R(\bar x_n)$ and $B_R(\bar x)$ denote the balls in the scaled and in the limit graphs, respectively).
\end{itemize}
\end{proposition}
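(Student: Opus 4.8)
The plan is to perform a blow-up analysis at the scale dictated by the Lagrange multiplier, adapting the scheme of \cite{CJS-2022} to the half-lines that are now present in the limiting graph. Throughout, recall from Lemma \ref{lem: stima max} that $u_n(x_n)\ge\lambda_n^{1/(p-2)}$, so that $\tilde\eps_n\le\eps_n:=\lambda_n^{-1/2}$ and the ratio $\tilde\eps_n/\eps_n$ lies in $(0,1]$. When \eqref{int max} holds the nearest vertex escapes in units of the finest scale $\tilde\eps_n$, no vertex is seen in the blow-up, and the statement reduces verbatim to \cite[Theorem 4.2]{CJS-2022}; I therefore concentrate on the case \eqref{ver max}. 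Since $\dist(x_n,\mathcal{V})\le C\tilde\eps_n\to0$ and $\mathcal{V}$ is finite, up to a subsequence $x_n\to\mathrm{v}$ for a single vertex $\mathrm{v}$; by Lemma \ref{lem: stima max} each $x_n\in\mathcal{K}$, whence $\mathrm{v}\in\mathcal{K}$, and, there being finitely many edges, all the $x_n$ eventually lie on one fixed bounded edge $\edge_1\subset\mathcal{K}$ incident to $\mathrm{v}$. This proves $(i')$.

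The next point is to show $\tilde\eps_n/\eps_n\to\ell$ with $\ell\in(0,1]$. The upper bound being already known, I would exclude $\ell=0$ by a secondary blow-up at the finer scale $\tilde\eps_n$, setting $w_n(y)=\tilde\eps_n^{2/(p-2)}u_n(x_n+\tilde\eps_n y)$ so that $w_n(0)=1$ is an interior maximum and $w_n$ solves $-w_n''+(\tilde\eps_n/\eps_n)^2w_n=\rho_nw_n^{p-1}\chi$. If $\ell=0$ the mass term disappears in the limit, leaving a bounded positive $W$ with $-W''=W^{p-1}$ where the nonlinearity acts and $-W''=0$ on the half-lines; $W$ is then concave on the former edges and affine on the latter, and boundedness together with positivity forces $W\equiv0$, contradicting $W(0)=1$. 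Hence $\ell\in(0,1]$. Writing $\bar x_n=\dist(x_n,\mathcal{V})/\eps_n=\bigl(\dist(x_n,\mathcal{V})/\tilde\eps_n\bigr)\,(\tilde\eps_n/\eps_n)$ as a product of bounded quantities, we obtain $\limsup_n\bar x_n<+\infty$, which yields the two displays of $(ii')$.

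I then carry out the main blow-up at scale $\eps_n$. Setting $v_n(y)=\eps_n^{2/(p-2)}u_n(\eps_n y)$ normalizes the linear coefficient to $1$, so that $v_n$ solves $-v_n''+v_n=\rho_nv_n^{p-1}\chi_{\mathcal{K}/\eps_n}$ with the Kirchhoff condition on the rescaled graph $\cG/\eps_n$. As $n\to\infty$ this graph converges to the star graph $\cG_m$: the bounded edges incident to $\mathrm{v}$, of rescaled length $r_i/\eps_n\to+\infty$, open up into the half-lines $\ell_1,\dots,\ell_j$ carrying the nonlinearity (since $\chi_{\mathcal{K}/\eps_n}\to\chi_{\ell_1\cup\dots\cup\ell_j}$), while the original half-lines persist as $\ell_{j+1},\dots,\ell_m$. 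The maximality $u_n(x_n)=\max_{B_{R_n\tilde\eps_n}(x_n)}u_n$ becomes a maximality of $v_n$ at $\bar x_n$ over balls of radius $R_n\tilde\eps_n/\eps_n\to+\infty$ (here $\ell>0$ is essential), so $v_n\le v_n(\bar x_n)=(\tilde\eps_n/\eps_n)^{-2/(p-2)}\to\ell^{-2/(p-2)}$ locally uniformly. Edge-by-edge ODE regularity together with the Kirchhoff balance then yields $C^2_{\rm loc}$ bounds; Arzel\`a--Ascoli gives $v_{i,n}\to V_i$ in $C^2_{\rm loc}([0,+\infty))$ and $v_n\to V$ in $C^0_{\rm loc}(\cG_m)$, where $V$ solves the displayed star-graph problem. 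Positivity of $V$ follows from $V(\bar x)=\ell^{-2/(p-2)}>0$ and the ODE-uniqueness argument used in Proposition \ref{prop: ex for ae rho}, and its maximum sits at $\bar x=\lim_n\bar x_n\in\ell_1$ by the convergence.

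Finally, for $(iii')$ I would use $V$ itself as a test direction: from the limiting equation, $Q(V;V,\cG_m)=-(p-2)\int_{\cG_m}V^p\chi_{\ell_1\cup\dots\cup\ell_j}<0$, where $Q$ is as in Definition \ref{def: morse}, so a compactly supported truncation $\psi$ of $V$ on $B_{\bar R}(\bar x)$ still satisfies $Q(\psi;V,\cG_m)<0$ for $\bar R$ large; unscaling $\psi$ to a function $\phi_n$ supported in $B_{\bar R\eps_n}(x_n)$, a direct computation shows that $Q(\phi_n;u_n,\cG)$ equals a positive power of $\eps_n$ times $\int(|\psi'|^2+\psi^2-(p-1)\rho_nv_n^{p-2}\chi\psi^2)$, which by the $C^2_{\rm loc}$ convergence tends to $Q(\psi;V,\cG_m)<0$; hence $Q(\phi_n;u_n,\cG)<0$ for large $n$. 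Statement $(iv')$ is then a plain change of variables $x=\eps_n y$, under which $\lambda_n^{1/2-q/(p-2)}\int_{B_{R\eps_n}(x_n)}u_n^q$ becomes exactly $\int_{B_R(\bar x_n)}v_n^q$, and the $C^0_{\rm loc}$ convergence with $\bar x_n\to\bar x$ produces the stated edgewise decomposition over $\cG_m$. The hardest part, and the genuinely new difficulty compared with the compact setting of \cite{CJS-2022}, is the tail control needed to secure $V\in H^1(\cG_m)$ and $V\to0$ at infinity: on the linear half-lines boundedness already forces $V=ce^{-y}$, but on the nonlinear half-lines $\ell_1,\dots,\ell_j$ one must rule out the non-decaying bounded orbits (the equilibrium $V\equiv1$ and the periodic solutions encircling it) by transferring the $L^2$-smallness of $u_n$ at the far, escaping ends of the edges into decay of the profile; it is precisely this leakage-of-mass-at-infinity estimate that the noncompactness introduces.
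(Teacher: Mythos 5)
Your blow-up scheme coincides with the paper's up to one decisive point. Point ($i'$), the exclusion of the degenerate limit $\tilde\lambda=0$ (your concavity argument is an equivalent rephrasing of the paper's phase-plane remark), the $C^2_{\rm loc}$ compactness and identification of the limit equation, and points ($iii'$)--($iv'$) all proceed as in the paper. The genuine gap is in the step you yourself single out as the hardest: proving that $V(x)\to 0$ as $\dist(x,0)\to\infty$ and $V\in H^1(\cG_m)$. Your proposed mechanism --- transferring the $L^2$-smallness of $u_n$ at the far ends of the edges into decay of the profile --- fails quantitatively: under the scaling $v_n(y)=\eps_n^{2/(p-2)}u_n(\eps_n y)$ one has
\[
\int_{B_R(\bar x_n)} v_n^2\,dy \;=\; \lambda_n^{\frac12-\frac{2}{p-2}}\int_{B_{R\eps_n}(x_n)} u_n^2\,dx \;\le\; \lambda_n^{\frac12-\frac{2}{p-2}}\,\mu,
\]
and for $p>6$ the exponent $\frac12-\frac{2}{p-2}$ is positive, so the mass constraint yields a bound that \emph{diverges} rather than smallness; no decay of $V$ on the nonlinear half-lines can be extracted this way, and the bounded non-decaying orbits (the equilibrium $V\equiv 1$ and the surrounding periodic solutions) are not excluded.

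The paper's route is the one you are missing: it first establishes that the limit profile inherits the bound $m(V)\le 2$ from $m(u_n)\le 2$ (the analogue of the claim \eqref{cl: fin morse 2}, obtained by pulling back finitely many negative directions of $Q(\cdot\,;V,\cG_m)$ to compactly supported test functions for $u_n$ via the $C^2_{\rm loc}$ convergence), so that $V$ is stable outside a compact set; Lemma \ref{lem: sol to 0} then rules out the constant and periodic orbits --- each of which carries infinitely many negative directions on a half-line --- and delivers both the decay and $V\in H^1(\cG_m)$. Note that your argument for ($iii'$) also hinges on this: the identity $Q(V;V,\cG_m)=(2-p)\sum_{i\le j}\int_{\ell_i}V^p$ of Lemma \ref{lem: Morse pos} rests on an integration by parts that already presupposes $V\in H^1(\cG_m)$, so ($iii'$) cannot be closed before the decay is secured. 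A side remark: this difficulty is not created by the noncompactness of $\cG$ --- the limit star graph is unbounded already in the compact setting of \cite{CJS-2022}, where the same Morse-index mechanism is used; what the noncompactness of $\cG$ actually adds here is only the presence of the linear half-lines $\ell_{j+1},\dots,\ell_m$ in the limit graph, which you treat correctly.
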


The proof of the proposition can be obtained by making minor modifications to the one of Theorem 4.2 in \cite{CJS-2022}. Roughly speaking, with respect to the setting in \cite{CJS-2022}, we have to take into account the fact that the points $x_n$ (which belong to the compact core of $\cG$, by Proposition \ref{lem: stima max})  may accumulate on a vertex ${\rm{v}}$ of $\cG$ which belongs to some half-line. We will need the following two statements, which correspond to Lemmata 4.3 and 4.5 in \cite{CJS-2022}. 

\begin{lemma}\label{lem: sol to 0}
Let $U \in H^1_{{\rm loc}}(\cG_m)$ be a solution to 
\begin{equation}\label{nls on star}
\begin{cases}
-U'' + \lambda U = \rho U^{p-1} \chi_{\ell_1 \cup \dots \cup \ell_j} & \text{in $\cG_m$}, \\
U>0 & \text{in $\cG_m$}, \\
\sum_{i=1}^m U_i'(0) = 0,
\end{cases}
\end{equation}
for some $p>2$, $\rho, \lambda>0$, where $U_i$ denotes the restriction of $U$ on the $i$-th half-line $\ell_i$ of $\cG_m$, and $1<j \le m$. Suppose that $U$ is bounded, and that $U$ is stable outside a compact set $K$, in the sense that $Q(\varphi; U, \cG_m) \ge 0$ for all $\varphi \in H^1(\cG_m) \cap C_c(\cG_m \setminus K)$. Then $U(x) \to 0$ as ${\rm{dist}}(x,0) \to +\infty$, and $U \in H^1(\cG)$.
\end{lemma}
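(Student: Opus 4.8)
The plan is to study $U$ one half-line at a time, exploiting that \eqref{nls on star} reduces to an autonomous second-order ODE on each edge of $\cG_m$, the Kirchhoff condition at $0$ being irrelevant to the behaviour as $\dist(x,0)\to\infty$. First I would dispose of the half-lines $\ell_{j+1},\dots,\ell_m$ which do not carry the nonlinearity: there $U_i$ solves $-U_i''+\lambda U_i=0$, so $U_i(x)=a_i e^{\sqrt{\lambda}x}+b_i e^{-\sqrt{\lambda}x}$, and boundedness forces $a_i=0$, whence $U_i(x)=b_i e^{-\sqrt{\lambda}x}\to 0$ with $U_i,U_i'\in L^2$. No use of the stability hypothesis is needed here.

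The substantive case is a half-line $\ell_i$ with $i\le j$, where the equation is the autonomous $U_i''=\lambda U_i-\rho U_i^{p-1}$, with conserved energy $\tfrac12 (U_i')^2+V(U_i)\equiv E_i$ and $V(s):=-\tfrac\lambda2 s^2+\tfrac\rho p s^p$. The potential $V$ satisfies $V(0)=0$ with the origin a saddle point of the ODE, and has a center at $s=\beta:=(\lambda/\rho)^{1/(p-2)}$. A standard phase-plane discussion then shows that a globally defined, positive, bounded solution must be exactly one of: (a) the constant $\beta$; (b) periodic and bounded away from $0$; or (c) homoclinic to $0$, decaying like $e^{-\sqrt{\lambda}x}$. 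The goal is to invoke the stability assumption to discard (a) and (b), leaving (c).

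This exclusion is the heart of the matter. The key observation is that $\psi:=U_i'$ solves the linearised (Jacobi) equation $L\psi=0$ with $L:=-\tfrac{d^2}{dx^2}+\bigl(\lambda-(p-1)\rho U_i^{p-2}\bigr)$, obtained by differentiating the ODE; note that $L$ is precisely the operator appearing in $Q(\cdot\,;U,\cG_m)$ restricted to $\ell_i$. In case (b), $U_i'$ has consecutive simple zeros $x_1<x_2$ which, by periodicity, recur arbitrarily far out and can thus be chosen with $[x_1,x_2]$ outside $K$; since $U_i'$ is a sign-definite solution of $L\psi=0$ on $(x_1,x_2)$ vanishing at both endpoints, the first Dirichlet eigenvalue of $L$ on $(x_1,x_2)$ equals $0$, so on a slightly larger interval it is negative and produces $\varphi\in H^1(\cG_m)\cap C_c(\cG_m\setminus K)$ (extended by zero off $\ell_i$) with $Q(\varphi;U,\cG_m)<0$, contradicting stability. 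In case (a) one has $\lambda-(p-1)\rho\beta^{p-2}=-(p-2)\lambda<0$, so testing with a long normalised bump supported far out on $\ell_i$ makes $Q(\varphi;U,\cG_m)$ arbitrarily close to $-(p-2)\lambda<0$, again a contradiction. Hence (c) holds and $U_i\to 0$.

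Collecting the three types of half-lines yields $U(x)\to 0$ as $\dist(x,0)\to\infty$. Since the decay is exponential on every edge (explicitly on the linear half-lines, and on the nonlinear ones because, $U$ being small, the equation is a lower-order perturbation of $-U''+\lambda U=0$, i.e.\ $U$ lies on the stable manifold of the saddle), one gets $U,U'\in L^2$ on each half-line, so $U\in H^1(\cG_m)$. The step I expect to be the main obstacle is precisely the exclusion of the periodic profile (b): one must exhibit a test function supported beyond the compact set $K$ on which $Q$ is strictly negative, and this is where the Jacobi-field/Sturm oscillation argument is essential, together with the care needed to place the relevant interval $(x_1,x_2)$ (and its slight enlargement) outside $K$.
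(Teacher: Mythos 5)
Your proposal is correct and follows essentially the same route as the paper: the linear half-lines $\ell_{j+1},\dots,\ell_m$ are handled by the identical explicit-solution argument, while for $\ell_1,\dots,\ell_j$ the paper simply defers to \cite{CJS-2022} and \cite[Theorem 2.3]{EspPet}, whose content is precisely your phase-plane classification together with the exclusion of the constant and periodic profiles via the Jacobi field $U_i'$ and the stability hypothesis outside $K$. In effect you have written out in full the argument the paper cites, and the details (the conserved energy, the saddle/center structure, the Dirichlet eigenvalue comparison on a slightly enlarged interval placed beyond $K$, and the exponential decay on the stable manifold giving $U\in H^1(\cG_m)$) are all sound.
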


\begin{proof}
On $\ell_1, \dots, \ell_j$, one can argue exactly as in \cite{CJS-2022} (which in turn follows \cite[Theorem 2.3]{EspPet}). On $\ell_{j+1}, \dots, \ell_m$, we note that $U$ satisfies $U'' = \lambda U$ for some $\lambda>0$, and, since $U$ is bounded, we deduce that the restriction of $U$ on each of these half-lines has the form $U(x) = c e^{-\sqrt{\lambda} x}$, for some $c>0$. 
\end{proof}

\begin{lemma}\label{lem: Morse pos}
Let $U \in H^1(\cG_m)$ be any non-trivial solution of \eqref{nls on star}. Then its Morse index $m(U)$ is strictly positive.
\end{lemma}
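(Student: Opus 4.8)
The plan is to test the quadratic form $Q(\cdot\,;U,\cG_m)$ against (a compactly supported truncation of) the solution $U$ itself, and to exploit the superlinearity $p>2$ together with the positivity of $U$.

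First I would record the Nehari-type identity obtained by using $U$ as a test function in \eqref{nls on star}: multiplying the equation by $U$ and integrating over $\cG_m$ (the boundary contributions at the vertex $0$ cancel thanks to the Kirchhoff condition $\sum_{i=1}^m U_i'(0)=0$, and there is no contribution at infinity since $U\in H^1(\cG_m)$), one obtains
\[
\int_{\cG_m}\left(|U'|^2+\lambda U^2\right)dx = \rho\int_{\ell_1\cup\dots\cup\ell_j} U^p\,dx.
\]
Substituting this into the definition \eqref{second differential} of $Q$ with $\varphi=U$ gives
\[
Q(U;U,\cG_m)=\int_{\cG_m}\left(|U'|^2+\lambda U^2\right)dx-(p-1)\rho\int_{\ell_1\cup\dots\cup\ell_j}U^p\,dx=(2-p)\rho\int_{\ell_1\cup\dots\cup\ell_j}U^p\,dx.
\]
Since $p>2$, $\rho>0$, $U>0$, and the nonlinear region $\ell_1\cup\dots\cup\ell_j$ is nonempty (as $1<j\le m$), the right-hand side is strictly negative. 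All the integrals above are finite because $U\in H^1(\cG_m)$ embeds into $L^2\cap L^\infty$, hence into $L^p$.

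The only issue is that $U$ is not admissible in Definition \ref{def: morse}, as it need not be compactly supported. I would therefore regularize: fix a Lipschitz cutoff $\eta_R$ equal to $1$ on $B_R(0)$, vanishing outside $B_{2R}(0)$, with $|\eta_R'|\le C/R$, and set $\varphi_R:=\eta_R U\in H^1(\cG_m)\cap C_c(\cG_m)$. Expanding $Q(\varphi_R;U,\cG_m)$ with $\varphi_R'=\eta_R U'+\eta_R' U$, the cross term and the $|\eta_R'|^2U^2$ term are bounded respectively by $CR^{-1}\big(\int_{B_{2R}\setminus B_R}(|U'|^2+U^2)\big)^{1/2}$ and $CR^{-2}\int_{B_{2R}\setminus B_R}U^2$, and both tend to $0$ as $R\to\infty$ because $U,U'\in L^2(\cG_m)$, so that the tail integrals vanish. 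The remaining terms converge to the corresponding integrals for $U$ by dominated convergence, using $U\in L^2\cap L^p$. Consequently $Q(\varphi_R;U,\cG_m)\to Q(U;U,\cG_m)<0$, and hence $Q(\varphi_R;U,\cG_m)<0$ for $R$ large enough. The one-dimensional subspace $W=\mathrm{span}\{\varphi_R\}\subset H^1(\cG_m)\cap C_c(\cG_m)$ then witnesses $m(U)\ge 1>0$.

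The computation leading to the sign of $Q(U;U,\cG_m)$ is entirely routine; the only genuinely technical point is the truncation argument, whose error terms vanish precisely because $U\in H^1(\cG_m)$ (and in fact decays, by Lemma \ref{lem: sol to 0}). I do not expect any serious obstacle here, since the structure is exactly that of the classical argument showing that a ground state is unstable, transported to the star-graph setting.
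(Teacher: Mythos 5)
Your proposal is correct and follows essentially the same route as the paper: the Kirchhoff/Nehari identity gives $Q(U;U,\cG_m)=(2-p)\rho\sum_{i=1}^j\int_{\ell_i}|U|^p\,dx<0$, and one then passes to compactly supported test functions. The paper simply invokes the density of $H^1(\cG_m)\cap C_c(\cG_m)$ in $H^1(\cG_m)$ together with the continuity of the quadratic form at this step, where you carry out the cutoff estimate explicitly; both are fine.
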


\begin{proof}
Thanks to the Kirchhoff condition
\[
\int_{\cG_m} \left(|U'|^2 + \lambda U^2\right) dx = \sum_{i=1}^j \int_{\ell_i} \rho |U|^p\, dx.
\]
Therefore
\[
Q(U; U, \cG_m) = (2-p) \sum_{i=1}^j\int_{\ell_i} |U|^p\,dx<0
\]
(recall that $j > 1$), and the thesis follows by density of $H^1(\cG_m) \cap C_c(\cG_m)$ in $H^1(\cG_m)$.
\end{proof}

\begin{proof}[Proof of Proposition \ref{thm: blow-up 1}]
We only emphasize the main differences with respect to \cite{CJS-2022}. Since $\{x_n\} \subset \mathcal{K}$ by Lemma \ref{lem: stima max}, and $\cG$ has a finite number of vertices and edges, up to extraction of a subsequence, the maximum points $x_n$ belong to the same bounded edge ${\rm{e}_1} \simeq [0,r_1]$, and converge. If \eqref{int max} holds, then the proof proceeds exactly as the one of \cite[Theorem 4.2 under assumption (4.3)]{CJS-2022}. If instead \eqref{ver max} holds, then $x_n \to {\rm{v}} \in \mathcal{V}$, since $\tilde \eps_n \to 0$. Contrary to \cite{CJS-2022}, we have to take into account the possibility that ${\rm{v}}$ is on some half-line of $\cG$; thus, the proof does not directly follows from \cite{CJS-2022}, but can be easily adjusted.\\
By the previous discussion, ($i'$) holds, and we can suppose that
\[
\frac{d_n}{\tilde \eps_n} \to \eta \in [0,+\infty), \quad d_n:={\rm dist}(x_n,\mathcal{V}) = x_n.
\]
Let 
\[
\tilde u_{i,n}(y):= \tilde \eps_n^{\frac{2}{p-2}} u_n(\tilde \eps_n y) \quad \text{for }y \in \tilde {\rm e}_{i,n}:= \frac{{\rm e}_i}{\tilde \eps_n}, \ \text{with $i=1,\dots,j$}, \text{ and for }y \in \tilde{l}_{i,n} := \frac{l_i}{\tilde \eps_n}, \ \text{with $i=j+1,\dots,m$},  
\]
and $\tilde u_n =(\tilde u_{1,n}, \dots, \tilde u_{m,n})$. Note that $\tilde u_n$ is defined on a graph $\cG_{m,n}$ consisting in $j$ expanding edges and $m-j$ half-lines, glued together at their common origin, which is identified with the coordinate $0$ on each edge or half-line. In the limit $n \to \infty$, this graph converges to the star-graph $\cG_m$. Clearly, for every $a>\eta +1$ and large enough $n$
\begin{equation}\label{glob max}
\tilde u_{1,n}\left(\frac{d_n}{\tilde \eps_n}\right) = 1 = \max_{B_a(0)} \tilde u_n
\end{equation}
(since $u_n(x_n) = \max_{B_{R_n \tilde \eps_n}(x_n)} u_n$ for some $R_n \to +\infty$),
\[
-\tilde u_n'' + \tilde \eps_n^2 \lambda_n \tilde u_n = \rho_n \tilde u_n^{p-1} \chi_{\tilde{{\rm{e}}}_{1,n} \cup \dots \cup \tilde{{\rm{e}}}_{j,n}}, \quad \tilde u_n >0 
\] 
on any edge of $\cG_{m,n}$, and the Kirchhoff condition at the origin holds. Also, by Lemma \ref{lem: stima max},
\[
\tilde \eps_n^2 \lambda_n \in (0,1] \quad \forall n.
\]
Thus, by elliptic estimates, for $i=1,\dots, j$ we have that $\tilde u_{i,n} \to \tilde u_i$ in $C^2_{\rm loc}([0,+\infty))$ for every $i =1,\dots, j$, and the limit $\tilde u_i$ solves
\begin{equation}\label{limit pb 2}
- \tilde u''_i+\tilde \lambda \tilde u_i = \tilde u_i^{p-1}, \quad \tilde u_i  \ge 0 \quad \text{in $(0,+\infty)$}
\end{equation}
for some $\tilde \lambda \in [0,1]$. Analogously, for $i=j+1,\dots, m$ we have that $\tilde u_{i,n} \to \tilde u_i$ in $C^2_{\rm loc}([0,+\infty))$, and the limit $\tilde u_i$ solves
\begin{equation}\label{limit pb 2-L}
- \tilde u''_i+\tilde \lambda \tilde u_i = 0, \quad \tilde u_i  \ge 0 \quad \text{in $(0,+\infty).$}
\end{equation}
 Moreover, since $\tilde u_{n}$ is continuous on $\cG_{m,n}$ and by uniform convergence, $\tilde u_{i_1}(0) = \tilde u_{i_2}(0)$ for every $i_1 \neq i_2$, so that $\tilde u \simeq (\tilde u_1,\dots, \tilde u_m)$ can be regarded as a function defined on $\cG_m$. Since the convergence $\tilde u_{i,n} \to \tilde u_i$ takes place in $C^2$ up to the origin, the Kirchhoff condition also passes to the limit. Now we exclude the case that $\tilde u \equiv 0$ on some half-line of $\cG_m$. By local uniform convergence, we have that 
\[
\tilde u_1(\eta) = \lim_{n \to \infty} \tilde u_{1,n}\left(\frac{d_n}{\tilde \eps_n}\right) = 1,
\]
and $\eta$ is a global maximum point of $\tilde u$, in view of \eqref{glob max}. This implies that $\tilde u_1>0$ in $(0,+\infty)$, by the uniqueness theorem for ODEs. In turn, the Kirchhoff condition, and the uniqueness theorem for ODEs again, ensure that $\tilde u_i>0$ on $(0,+\infty)$ for every $i$. Finally, we claim that 
\begin{equation}\label{cl: fin morse 2}
\text{the Morse index of $\tilde u$ is at most $2$}.
\end{equation}
The proof of this claim exploits the fact that $m(u_n) \le 2$ for every $n$, is analogue to the one of \cite[Eq. (4.10)]{CJS-2022}, and hence is omitted. 

The case $\tilde \lambda=0$ can be ruled out by phase-plane analysis (the equation $\tilde u_1'' + \tilde u_1^{p-1}=0$ on $\ell_1 = [0,+\infty)$ has only periodic sign-changing solutions, save the trivial one). Therefore, 
\begin{equation}\label{rel max lam}
0<\liminf_{n \to \infty} \frac{\lambda_n}{(u_n(x_n))^{p-2}} \le \limsup_{n \to \infty} \frac{\lambda_n}{(u_n(x_n))^{p-2}} \le 1,
\end{equation}
which proves the first estimate in \eqref{rel eps tilde eps 2}. Furthermore, $\tilde u$ is bounded and hence, by Lemma \ref{lem: sol to 0}, $\tilde u \to 0$ as $|x| \to +\infty$, and $\tilde u \in H^1(\cG_m)$.

At this point it is equivalent, but more convenient, to work with $v_n$ defined in point ($ii'$) of the proposition, rather than with $\tilde u_n$. By \eqref{ver max} and \eqref{rel max lam},
\[
\limsup_{n \to \infty} \frac{{\rm dist}(x_n,\mathcal{V})}{\eps_n} <+\infty.
\]
Thus, similarly to what was done before, one can show that $v_n$ converges, in $C^0_{\rm loc}(\cG_m)$ and in $C^2_{{\rm loc}}([0,+\infty))$ on every half-line, to a limit function $V\simeq(V_1,\dots,V_m)$, which solves 
\begin{equation}\label{limit pb v 2}
\begin{cases}
- V''+V = V^{p-1} \chi_{\ell_1 \cup \dots \cup \ell_j}, \quad V  \ge 0 & \text{in $\cG_m$}, \\
\sum_{i=1}^m V_i'(0^+) = 0;
\end{cases}
\end{equation}
furthermore, $V$ has a positive global maximum on the half-line $\ell_1$, $V_1(\bar x) \ge 1$ (thus $V>0$ in $\cG_m$), and has finite Morse index $m(V) \le 2$. Moreover, by Lemma \ref{lem: sol to 0}, $V \to 0$ as $|x| \to \infty$. Thus, ($ii'$) is proved. Point ($iv'$) follows directly by local uniform convergence. Finally, point ($iii'$) is a consequence of Lemma \ref{lem: Morse pos}. This implies that there exists $\phi \in H^1(\cG_m) \cap C_c(\cG_m)$ such that $Q(\phi; V,\cG_m)<0$; thus, defining
\[
\phi_{i,n}(x)= \eps_n^{\frac{1}{2}} \phi_i\left(\frac{x-x_n}{\eps_n}\right),
\]
it is not difficult to deduce that for sufficiently large $n$ we have $Q(\phi_{i,n};u_n, \cG)<0$, and ${\rm supp}\, \phi_{i,n} \subset B_{\bar R \eps_n}(x_n)$ for some positive $\bar R$.
\end{proof}

In what follows, we focus on the global behavior of $\{u_n\}$ and show the way that $\{u_n\}$ decays exponentially away from local maximum points. In fact, by similar arguments to those used to establish Theorem 4.6 in \cite{CJS-2022} (see also \cite{DGMP,Espetal,EspPet,PieVer}), we have the following result.

\begin{proposition}\label{thm: blow-up 2}
There exist $k \in \{1,2\}$, and sequences of points $\{P_n^i\}$, $i \in \{1,k\}$, such that
\begin{gather}
\lambda_n \dist(P_n^1, P_n^2) \to +\infty \quad ~(\mbox{if}~k=2),\label{36ep} \\
u_n(P_n^i) = \max_{B_{R_n \lambda_n^{-1/2}}(P_n^i)} u_n \quad \text{for some $R_n \to +\infty$, for every $i$,} \label{37ep}
\end{gather}
and constants $C_1, C_2>0$ such that
\begin{equation*}\label{estim far away}
u_n(x) \le C_1 \lambda_n^{\frac{1}{p-2}} \sum_{i=1}^k e^{-C_2 \lambda_n^{\frac{1}{2}} \dist(x, P_n^i)}+ C_1 \lambda_n^{\frac{1}{p-2}}\sum_{j=1}^{m_3} e^{-C_2 \lambda_n^{\frac{1}{2}} \dist(x, {\rm v}_j )},   \quad \forall x \in \cG \setminus \bigcup_{i=1}^k B_{R \lambda_n^{-1/2}} (P_n^i),
\end{equation*}
where ${\rm v}_1, \dots, {\rm v}_{m_3}$ are all the vertices of $\cG$.
\end{proposition}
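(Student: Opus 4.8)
The plan is to follow the scheme of \cite[Theorem 4.6]{CJS-2022}, adapting it to account for the half-lines of $\cG$ and the vertices at which they are attached to $\mathcal{K}$. The argument splits into three parts: an iterative selection of the concentration points, a bound on their number coming from the Morse index, and a barrier argument yielding the exponential decay.

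First I would select the points $\{P_n^i\}$ iteratively. Let $P_n^1$ be a global maximum point of $u_n$; by Lemma \ref{lem: stima max} it lies in $\mathcal{K}$ and $u_n(P_n^1) \ge \lambda_n^{1/(p-2)}$, and \eqref{37ep} holds for $i=1$. Having chosen $P_n^1, \dots, P_n^{i}$, I would look for a further local maximum point $P_n^{i+1}$ satisfying \eqref{37ep} and separated from the previous ones at the natural scale, namely $\lambda_n^{1/2}\dist(P_n^{i+1}, P_n^\ell) \to +\infty$ for all $\ell \le i$; if no such point exists, the selection stops. Since $\lambda_n \to +\infty$, separation at scale $\eps_n=\lambda_n^{-1/2}$ automatically gives \eqref{36ep}, because $\lambda_n \dist = \lambda_n^{1/2}\cdot \big(\lambda_n^{1/2}\dist\big) \to +\infty$. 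Around each selected $P_n^i$, Proposition \ref{thm: blow-up 1} provides a nontrivial limit bubble and, crucially via Lemma \ref{lem: Morse pos}, a test function $\phi_{i,n}$ with $\supp \phi_{i,n} \subset B_{\bar R \eps_n}(P_n^i)$ and $Q(\phi_{i,n};u_n,\cG) <0$.

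To bound the number $k$ of selected points, I would use that the separation $\lambda_n^{1/2}\dist(P_n^i,P_n^\ell) \to\infty$ makes the supports of the $\phi_{i,n}$ pairwise disjoint for large $n$, so that the $\phi_{i,n}$ span a $k$-dimensional subspace of $H^1(\cG)\cap C_c(\cG)$ on which $Q(\cdot\,; u_n, \cG)$ is negative definite. Since $m(u_n) \le 2$ by construction, this forces $k \le 2$, giving $k \in \{1,2\}$. The next step is to prove that the selection is exhaustive: I would show that $w_n(x) := \dist(x, \{P_n^1,\dots,P_n^k\})^{2/(p-2)} u_n(x)$ is uniformly bounded on $\cG$. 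If this failed, a new concentration point far from all the $P_n^i$ would appear, and applying Proposition \ref{thm: blow-up 1} there would produce an additional disjointly-supported negative direction for $Q$, contradicting the termination of the selection and the bound $m(u_n) \le 2$.

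Finally, the exponential decay would follow from a comparison argument. Outside $\bigcup_i B_{R\eps_n}(P_n^i)$, the uniform bound on $w_n$ gives $u_n(x)^{p-2} \le C\,\dist(x,\{P_n^i\})^{-2} \le C\lambda_n R^{-2}$, so for $R$ large enough $(p-1)\rho_n u_n^{p-2} \le \lambda_n/2$ there; consequently $-u_n'' + \tfrac{\lambda_n}{2}u_n \le 0$ on that region, i.e.\ $u_n$ is a subsolution of the linear equation with constant $\lambda_n/2$. On the half-lines outside $\mathcal{K}$ one has $\kappa \equiv 0$, hence $-u_n''+\lambda_n u_n = 0$ and $u_n$ decays as a pure exponential, as in Proposition \ref{prop: properties} and Lemma \ref{lem: sol to 0}. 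I would then build a global supersolution of the form $C_1\lambda_n^{1/(p-2)}\big(\sum_i e^{-C_2\lambda_n^{1/2}\dist(x,P_n^i)} + \sum_j e^{-C_2\lambda_n^{1/2}\dist(x,{\rm v}_j)}\big)$ and invoke the maximum principle on $\cG$ to dominate $u_n$. The main obstacle is precisely this last construction: since an $H^1$ function on $\cG$ is merely continuous, not $C^1$, at the vertices, a single exponential anchored at a peak need not remain a supersolution across a vertex where the Kirchhoff condition redistributes the outer derivatives. Adding the vertex-centered terms $e^{-C_2\lambda_n^{1/2}\dist(x,{\rm v}_j)}$ restores a valid supersolution compatible with the Kirchhoff conditions at every vertex, which is exactly why these terms appear in the statement; once the comparison is set up vertex by vertex, the estimate follows as in \cite[Theorem 4.6]{CJS-2022}.
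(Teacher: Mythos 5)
Your proposal follows essentially the same route as the paper: an Esposito--Petralla-type iterative selection of peaks whose number is capped at $2$ by disjointly supported negative directions for $Q$ and the bound $m(u_n)\le 2$, the vanishing estimate away from the peaks giving $\rho_n u_n^{p-2}\le \lambda_n/2$ hence $-u_n''+\tfrac{\lambda_n}{2}u_n\le 0$ outside the concentration balls (trivially on $\cG\setminus\mathcal{K}$ where $\kappa\equiv 0$), and a comparison with a supersolution built from exponentials anchored at the peaks and at the vertices to handle the Kirchhoff conditions. The only quibble is that the smallness condition needed for the subsolution inequality is $\rho_n u_n^{p-2}\le\lambda_n/2$ (from the equation itself), not $(p-1)\rho_n u_n^{p-2}\le\lambda_n/2$ (which pertains to the linearized form $Q$); this is harmless since the stronger bound also holds for $R$ large.
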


\begin{proof}
Having proved Proposition \ref{thm: blow-up 1}, we can adjust the proof of \cite[Theorem 4.6]{CJS-2022} with minor changes.

\emph{Step 1)  There exist $k \in \{1,2\}$, and sequences of points $\{P_n^i\}$, $i \in \{1,k\}$, such that \eqref{36ep} and \eqref{37ep} hold, and moreover
\begin{equation}\label{39ep}
\lim_{R \to +\infty} \left( \limsup_{n \to \infty} \  \lambda_n^{-\frac{1}{p-2}} \max_{d_n(x) \ge R \lambda_n^{-1/2}} u_n(x) \right) = 0.
\end{equation}
where $d_n(x) =\min\{ \dist(x,P_n^i): \ i=1,k\}$ is the distance function from $\{P_1^n, P_k^n\}$.}

The proof of this point, as in \cite{CJS-2022}, can be directly adapted from \cite[Theorem 3.2]{EspPet}.

\medskip

\emph{Step 2) Conclusion of the proof.} Here we argue as in the second step of the proof of \cite[Theorem 4.6]{CJS-2022}. Therein, a comparison argument is used based on the following estimates: on one side, for all small $\eps \in (0,1)$, there exist $R>0$ and $n_R \in \mathbb{N}$ such that 
\begin{equation}\label{4.20 art1}
\max_{d_n(x) > R \lambda_n^{-1/2}} u_n(x) \le \lambda_n^{\frac{1}{p-2}} \eps, \quad \forall n \ge n_R.
\end{equation}
Moreover, in $A_n:= \{d_n(x) > R \lambda_n^{-1/2}\}$ it results that
\begin{equation}\label{4.21 art1}
-u_n''+\frac{\lambda_n}{2} u_n \le 0.
\end{equation}
Once we have checked that these estimates are satisfied also in the present setting, we can follow \cite{CJS-2022} verbatim. 

Inequality \eqref{4.20 art1} is given by the first step of this proof. Regarding \eqref{4.21 art1}, its validity on $\mathcal{K} \cap A_n$ follows from \eqref{4.20 art1}, since on this set
\[
u_n'' = (\lambda_n-u_n^{p-2}) u_n 
\]
and $\lambda_n-u_n^{p-2} \ge \lambda_n/2$ provided that $\eps >0$ is small enough. On $\cG \setminus \mathcal{K}$, instead, we can directly observe that 
\[
u_n'' = \lambda_n u_n \ge \frac{\lambda_n}2 u_n
\]
since $\lambda_n >0$ and $u_n \ge 0$. Therefore, \eqref{4.20 art1} and \eqref{4.21 art1} holds, and the thesis follows.
\end{proof}

Now, we can give

\begin{proof}[Proof of Theorem \ref{thm: main ex}]
We have already observed that if  $\{\lambda_n\}$ is  bounded then also $\{u_{\rho_n}\}$ is bounded. We shall thus assume by contradiction that  $\lambda_n \to +\infty$, up to extraction of a subsequence. Then, Propositions \ref{thm: blow-up 1} and \ref{thm: blow-up 2} hold for $u_n:=u_{\rho_n}$ and by similar arguments as in the proof of Proposition 5.1 in \cite{CJS-2022}, we can get a contradiction. \smallskip

 Since $\{\lambda_n\}$ is bounded, passing to a subsequence, there exists $\lambda^*\in \mathbb{R}$ such that $\lim\limits_{n\to+\infty}\lambda_n=\lambda^*$. Also, since $\{u_n\}$ is  bounded  and $u_n>0$ on $\cG$, there exists $u^*\in H^1(\mathcal{G})$ such that
 \begin{eqnarray*}
 &&u_{n}\rightharpoonup u^*~~\mbox{in}~~ H^1(\mathcal{G}),\\
 &&u_n\to u^*~~\mbox{in}~~L_{{\rm{loc}}}^r(\mathcal{G}), r>2,\\
 &&u_{n}(x)\to u^*(x)~~\mbox{for a.e.}~ x\in \mathcal{G},
 \end{eqnarray*}
which implies that $u^*\ge0$ and $u^*$ satisfies
\begin{equation*}
- (u^*)'' + \lambda^* u^*=\kappa(x) (u^*)^{p-1}
\end{equation*}
together with the Kirchhoff condition
\begin{eqnarray*}
\sum_{{\rm e}_i\succ {\rm v}_j}(u^*)'({\rm v}_j)+\sum_{l_i\succ {\rm v}_j}(u^*)'({\rm v}_j)=0, \qquad \forall j=1,\cdots, m_3.
\end{eqnarray*}
By similar arguments to those in the proof of Proposition \ref{prop: ex for ae rho}, we can deduce that $\lambda^* >0$, $u_n \to u^*$ strongly in $H^1(\mathcal{G})$, and that $u^*>0$. This completes the proof.
\end{proof}

{\small
}

\end{document}